\numberwithin{equation}{section}
\newcommand{\ie}{\emph{i.e.}}
\newcommand{\cf}{\emph{cf.}}
\newcommand{\etal}{\emph{et al.}}
\newcommand{\Com}{\mathbb{C}}
\newcommand{\Real}{\mathbb{R}}
\newcommand{\sgn}{\mathop{\mathrm{sgn}}\nolimits}
\newcommand{\supp}{\mathop{\mathrm{supp}}\nolimits}
\newcommand{\esssup}{\mathop{\mathrm{ess\;\!sup}}}
\newcommand{\Dom}{\mathsf{D}}
\newcommand{\eps}{\varepsilon}
\newcommand{\sii}{L^2}
\newtheorem{Theorem}{Theorem}
\newtheorem{Corollary}{Corollary}
\newtheorem{Lemma}{Lemma}
\theoremstyle{definition}
\newtheorem{Remark}{Remark}
\definecolor{DarkBlue}{rgb}{0,0.1,0.7}
\definecolor{DarkGreen}{rgb}{0,0.5,0.1}
\newcommand\soutD{\bgroup\markoverwith
{\textcolor{DarkBlue}{\rule[.01ex]{2pt}{1pt}}}\ULon}
\newcommand{\Hm}[1]{\leavevmode{\marginpar{\tiny%
$\hbox to 0mm{\hspace*{-0.5mm}$\leftarrow$\hss}%
\vcenter{\vrule depth 0.1mm height 0.1mm width \the\marginparwidth}%
\hbox to
0mm{\hss$\rightarrow$\hspace*{-0.5mm}}$\\\relax\raggedright #1}}}
\begin{document}
%
\title{\textbf{\Large 
Sharp bounds for eigenvalues of biharmonic 
operators with complex potentials in low dimensions}}

\author{Orif~O.~Ibrogimov,$^a$
	\ David Krej\v{c}i\v{r}{\'\i}k\,$^a$
	\ and \ Ari Laptev\,$^b$}
\date{\small 
\begin{quote}
\emph{
	\begin{itemize}
		\item[$a)$] 
		Department of Mathematics, Faculty 
		of Nuclear Sciences and Physical 
		Engineering, Czech Technical University 
		in Prague,
		Trojanova 13, 12000 Prague 2, Czech 
		Republic; \\ibrogori@fjfi.cvut.cz, 
		david.krejcirik@fjfi.cvut.cz.%
		\item[$b)$] 
		Department of Mathematics, Imperial 
		College London, Huxley Building, 180 
		Queen's Gate, London SW7 2AZ, UK; 
		a.laptev@imperial.ac.uk.%
		\end{itemize}
	}
\end{quote}
5 March 2019}
\maketitle
\begin{abstract}
We derive sharp quantitative bounds for 
eigenvalues of biharmonic operators 
perturbed by complex-valued potentials 
in dimensions one, two and three. 
\end{abstract}

\section{Introduction}
%
Spectral theory of self-adjoint operators  
has exhibited an enormous development 
since the discovery of quantum mechanics 
in the beginning of the last century
and by now it can be regarded as well understood in many respects.
Recent years have brought new motivations
for considering \emph{non-self-adjoint} operators, too,
including quantum mechanics again, but this theory is still in its infancy. 

A strong impetus for a systematic study of spectral properties 
of Schr\"odinger operators with complex-valued potentials 
goes back to the celebrated result of Davies \etal\ 
in 2001~\cite{Abramov-Aslanyan-Davies_2001}
showing that every discrete eigenvalue of the 
one-dimensional operator $-\Delta\dot{+}V$ lies in the closed disk
of the complex plane centred at the origin 
and with the radius equal 
to~$\frac{1}{4}\|V\|^2_{L^1(\Real)}$. 
This bound is sharp in the sense that 
there are potentials with eigenvalues
lying on the boundary circle. 
The purpose of this paper is to extend this type of bounds
to biharmonic operators  
\begin{equation}\label{bi-intro}
  H_V := \Delta^2\dot{+}V
  \qquad \mbox{in} \qquad
  \sii(\Real^d)
\end{equation}
in dimensions $d=1, 2, 3$.
The feature of our results is that our bounds are sharp 
and quantitative (at least if $d=1,3$).
Moreover, potentials of minimal regularity are considered
and we cover embedded eigenvalues, too. 
 
The pioneering work~\cite{Abramov-Aslanyan-Davies_2001}
has been followed by extensive investigations 
devoted to bounds on the number and  
magnitudes of eigenvalues of Schr\"odinger operators
with complex-valued potentials.
Instead of listing the huge number of articles by various groups
on operators not dealt with in this paper,
we refer to the recent work~\cite{FKV} 
containing a rather extensive bibliography. 
On the other hand, related issues for
higher-order partial differential operators 
seem to be just scarcely considered in the literature,
even in the self-adjoint setting. 
The reader is referred to 
\cite{Netrusov-Weidl-1996, Weidl-CPDE-1999,
Ekholm-Enblom-2010, Kiik-Kurasov-Usman2015, Enblom-LMP-2016, Cuenin-JFA-2017, Hulko-RMP-2018}; 
see also~\cite{Evans-Lewis-2005, 
Arazy-Zelenko-2006, Arazy-Zelenko-II-2006,  
Foerster-Ostensson-MNr-2008, Arrieta.et.al-IEOT-2017}
for other spectral questions.

We recall the importance of biharmonic operators 
in continuum mechanics and in particular linear elasticity theory 
and the solution of Stokes flows.
To the best of our knowledge, 
the only work where eigenvalue bounds for non-self-adjoint 
biharmonic operators were considered previously 
is the recent paper of Enblom~\cite{Enblom-LMP-2016}. 
We also refer to recent Hulko's paper~\cite{Hulko-RMP-2018} 
for bounds on the number of discrete eigenvalues 
of biharmonic operators perturbed by exponentially decaying 
complex-valued potentials.
While the author of~\cite{Enblom-LMP-2016} proceeds in 
a rather extensive generality (polyharmonic operators in any dimension),
her bounds are not quantitative 
(in the sense that the involved constants depend on the arguments 
of the individual eigenvalues under consideration)
and embedded eigenvalues are not covered. 
For these reasons, it is our belief that our results  
are of interest even if restricted to biharmonic operators
in low dimensions only.
Moreover, our methodology in principle enables one to obtain analogous 
results also for higher-order polyharmonic operators. 
 
Before stating our main results, 
we remark that the operator~\eqref{bi-intro} 
is introduced in a standard way 
as an m-accretive operator obtained as a form sum of
the bi-Laplacian $H_0 := \Delta^2$ with domain $H^2(\Real^d)$
and a relatively form-bounded potential $V:\Real^d \to \Com$.
We refer to Section~\ref{Sec.pre} for more details.
More specifically, our standing assumption is that 
there exist numbers $a\in(0,1)$ and $b\in\Real$ such that, 
for all $\psi\in H^2(\Real^d)$,
\begin{equation}\label{rel.bddness}
	\int_{\Real^d} |V||\psi|^2 \leq 
	a\int_{\Real^d} |\Delta\psi|^2 
		 + b\int_{\Real^d} |\psi|^2.
\end{equation}
In particular, potentials $V \in L^1(\Real^d)$ are covered by this hypothesis.
Furthermore, in the same way, 
it is also possible to proceed in a greater generality
and give a meaning to the distributional Dirac delta potential~$\delta$,
which is explicitly solvable.
Putting $\|\delta\|_{L^1(\Real^d)}:=1$ by convention,
Theorem~\ref{Thm.EV.bound.unif} remains valid in this 
more general setting.

Our first result is a biharmonic analogue of the celebrated result 
of~\cite{Abramov-Aslanyan-Davies_2001}
for one-dimensional Schr\"odinger operators.       
\begin{Theorem}\label{Thm.EV.bound.unif}
Let $d\in\{1,2,3\}$ and assume that $V \in L^1(\Real^d)$. 
Then there exists a universal constant $C_d>0$ such that 
\begin{equation}\label{opt.bound.unif}
	\displaystyle
	\sigma_\mathrm{p}(H_V) 
	\subset
	\left\{
	\lambda \in \Com : \
	|\lambda| \leq C_d \, 
        \|V\|_{L^1(\Real^d)}^{4/(4-d)}
	\right\}.
\end{equation}
Moreover, one can take $C_1=\frac{1}{4}$ 
and~$C_3=\frac{1}{4}\frac{1}{(4\pi)^4}$ for
dimensions~$d=1$ and~$d=3$, respectively.
\end{Theorem}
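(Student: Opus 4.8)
The plan is to combine the Birman--Schwinger principle with an explicit pointwise bound on the integral kernel of the free biharmonic resolvent, the sharp constants coming from two elementary inequalities. First I would reduce the problem to the resolvent. Assume $\lambda\in\sigma_\mathrm{p}(H_V)$ with $\lambda\neq0$ (the value $\lambda=0$ satisfies~\eqref{opt.bound.unif} trivially) and pick an eigenfunction $\psi\in\Dom(H_V)\subset H^2(\Real^d)$, so that $\Delta^2\psi=(\lambda-V)\psi$. Writing $V=W_1W_2$ with $W_1:=|V|^{1/2}$ and $W_2:=\sgn(V)|V|^{1/2}$, the standard manipulation $\psi=-R_0(\lambda)W_2W_1\psi$ shows that $-1$ is an eigenvalue of the Birman--Schwinger operator $W_1R_0(\lambda)W_2$, where $R_0(\lambda):=(H_0-\lambda)^{-1}$ when $\lambda\in\Com\setminus[0,\infty)$, and where $R_0(\lambda)$ must be replaced by its limiting-absorption boundary value $R_0(\lambda\pm\mathrm{i}0)$ when $\lambda$ is embedded in $\sigma_\mathrm{ess}(H_0)=[0,\infty)$; in either case $\|W_1R_0(\lambda)W_2\|\geq1$. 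I expect the embedded case to be the main obstacle: it requires the limiting absorption principle for $\Delta^2$ (which, via the factorisation below, reduces to that for $-\Delta$) together with the Sobolev embedding $H^2(\Real^d)\hookrightarrow L^\infty(\Real^d)$, so that $V\psi\in L^1(\Real^d)$ and $R_0(\lambda\pm\mathrm{i}0)(V\psi)$ is meaningful and equals $\psi$ -- and this embedding is available only for $d\leq3$.

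Next I would make $R_0(\lambda)$ explicit via the factorisation $H_0-\lambda=(-\Delta-\sqrt\lambda)(-\Delta+\sqrt\lambda)$ and partial fractions,
\begin{equation*}
 R_0(\lambda)=\frac{1}{2\sqrt\lambda}\Bigl[(-\Delta-\sqrt\lambda)^{-1}-(-\Delta+\sqrt\lambda)^{-1}\Bigr],
\end{equation*}
which expresses its kernel through the classical Helmholtz kernels in $\Real^d$. With $\mu:=\lambda^{1/4}$ normalised by $\operatorname{Re}\mu\geq0$, $\operatorname{Im}\mu\geq0$ and $r:=|x-y|$, one gets (with the appropriate $\mathrm{i}0$-prescription on $[0,\infty)$) the kernels $\frac{1}{4\mu^{3}}(\mathrm{i}e^{\mathrm{i}\mu r}-e^{-\mu r})$ for $d=1$, $\frac{1}{8\pi\mu^{2}r}(e^{\mathrm{i}\mu r}-e^{-\mu r})$ for $d=3$, and an analogous Hankel-function combination for $d=2$. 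The decisive point is that the subtraction cancels the diagonal singularities of the individual Helmholtz kernels (a factor $|x-y|^{-1}$ for $d=3$, a logarithm for $d=2$), so $R_0(\lambda)$ has a \emph{bounded} kernel -- which is exactly what fails for $d\geq4$ and forces the low-dimensional restriction. The sharp constants then rest on the elementary estimates
\begin{equation*}
 \bigl|\mathrm{i}e^{\mathrm{i}\mu r}-e^{-\mu r}\bigr|\leq\sqrt2,
 \qquad
 \bigl|e^{\mathrm{i}\mu r}-e^{-\mu r}\bigr|\leq\sqrt2\,|\mu|\,r
 \qquad(r\geq0),
\end{equation*}
where, with $p:=\operatorname{Re}\mu$ and $q:=\operatorname{Im}\mu$, the first follows from the identity $|\mathrm{i}e^{\mathrm{i}\mu r}-e^{-\mu r}|^{2}=e^{-2qr}+e^{-2pr}+2e^{-(p+q)r}\sin((p+q)r)$ together with $(1-e^{-2pr})(1-e^{-2qr})\geq0$ and $\sin s\leq\sinh s$ ($s\geq0$), and the second from $|e^{\mathrm{i}\mu r}-e^{-\mu r}|^{2}=(e^{-pr}-e^{-qr})^{2}+2e^{-(p+q)r}\bigl(1-\cos((p+q)r)\bigr)$ together with the mean value theorem applied to $t\mapsto e^{-tr}$ and $1-\cos s\leq\tfrac12 s^{2}$. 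Both are tight at $r=0$, which is what makes the resulting constants optimal; for $d=2$ I would instead use that the Hankel combination is bounded on the closed first quadrant by a finite, but not explicitly evaluated, constant.

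In all three cases this gives $\sup_{x,y}|R_0(\lambda)(x,y)|\leq C\,|\lambda|^{-(4-d)/4}$, with $C=\tfrac{1}{2\sqrt2}$ for $d=1$ and $C=\tfrac{1}{4\sqrt2\,\pi}$ for $d=3$. Finally, estimating the Hilbert--Schmidt norm of $W_1R_0(\lambda)W_2$ (whose kernel is $W_1(x)R_0(\lambda)(x,y)W_2(y)$),
\begin{equation*}
 1\leq\|W_1R_0(\lambda)W_2\|^{2}\leq\iint_{\Real^d\times\Real^d}|V(x)|\,|R_0(\lambda)(x,y)|^{2}\,|V(y)|\,\der x\,\der y\leq\Bigl(\sup_{x,y}|R_0(\lambda)(x,y)|\Bigr)^{2}\|V\|_{L^1(\Real^d)}^{2},
\end{equation*}
so $|\lambda|^{(4-d)/4}\leq C\,\|V\|_{L^1(\Real^d)}$, i.e.\ $|\lambda|\leq C^{4/(4-d)}\|V\|_{L^1(\Real^d)}^{4/(4-d)}$. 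Inserting the constants yields $C_1=(2\sqrt2)^{-4/3}=\tfrac14$ and $C_3=(4\sqrt2\,\pi)^{-4}=\tfrac14(4\pi)^{-4}$, while for $d=2$ it establishes the existence of $C_2$, completing the proof.
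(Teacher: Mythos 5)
Your proposal is correct and follows essentially the same route as the paper: a (one-sided, limiting) Birman--Schwinger principle, the partial-fraction factorisation $(\Delta^2-\lambda)^{-1}=\frac{1}{2\sqrt\lambda}[(-\Delta-\sqrt\lambda)^{-1}-(-\Delta+\sqrt\lambda)^{-1}]$ to make the Green's function explicit, a uniform pointwise bound $|\widetilde G_\lambda(x,y)|\le c_d|\lambda|^{-(4-d)/4}$, and a Hilbert--Schmidt estimate of $\|K_\lambda\|$; the constants $c_1=\tfrac{1}{2\sqrt2}$, $c_3=\tfrac{1}{4\sqrt2\pi}$ and the resulting $C_1=\tfrac14$, $C_3=\tfrac14(4\pi)^{-4}$ agree exactly with the paper's.

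The one genuine difference is in how the two key pointwise inequalities for the Green's function are proved. The paper isolates them as Lemmas 3 and 4 and establishes
$e^{-2p}+e^{-2q}+2e^{-(p+q)}\sin(p+q)\le2$ and $e^{-2p}+e^{-2q}-2e^{-(p+q)}\cos(p+q)\le2(p^2+q^2)$
by fixing one variable and running a one-parameter critical-point analysis of the resulting function of the other variable. Your derivations are cleaner and more transparent: for the first, the factorisation $(1-e^{-2pr})(1-e^{-2qr})\ge0$ together with $\sin s\le\sinh s$ gives the bound in two lines; for the second, the algebraic identity $|e^{i\mu r}-e^{-\mu r}|^2=(e^{-pr}-e^{-qr})^2+2e^{-(p+q)r}(1-\cos((p+q)r))$ combined with the mean value theorem and $1-\cos s\le\tfrac12 s^2$ is equally short. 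These proofs buy readability without losing sharpness (both are attained at $r=0$, matching the paper's optimality discussion via Dirac delta potentials). Your treatment of embedded eigenvalues is sketchier than the paper's Lemma~\ref{Lem.BS} and Corollary~\ref{Corol.BS} -- the paper avoids any direct appeal to a limiting absorption principle by using compactly supported test functions and a cutoff limit, which is the more careful way to justify that $\liminf_{\eps\to0^\pm}\|K_{\lambda+i\eps}\|\ge1$ -- but in the $L^1$ setting your outline is essentially sound, since the limiting kernel is Hilbert--Schmidt anyway.
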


The theorem is sharp due to the weak-coupling asymptotics
$$
  \inf\sigma(H_{\beta V}) 
  = -c_d \, \beta^{4/(4-d)} \, \|V\|_{L^1(\Real^d)}^{4/(4-d)} 
  + o(\beta^{4/(4-d)})
  \qquad \mbox{as} \qquad
  \beta \to 0^+
$$
valid for every sufficiently regular,
non-positive, real-valued potential~$V$
with $c_1:=\frac{1}{4}$, $c_2:=\frac{1}{64}$ 
and $c_3:=\frac{1}{4}\frac{1}{(4\pi)^4}$,  
\cf~\cite[Eq.~(8)]{Netrusov-Weidl-1996}.
We do not know whether one can take $C_2 = \frac{1}{64}$
in \eqref{opt.bound.unif} for dimension $d=2$,
but it is natural to conjecture so.
The result is sharp also in the sense that 
for $d=1$ (respectively, $d=3$) one has
$
  \sigma_\mathrm{p}(H_{\alpha\delta})
  = \{\frac{1}{4}\alpha^{4/3}e^{-i\pi/3}\} 
$
for every $\alpha \in \Com$ such that $\Re\alpha < |\Im\alpha|$
(respectively, 
$
  \sigma_\mathrm{p}(H_{4\pi\alpha\delta})
  \supset \{-\frac{1}{4}\alpha^{4}\} 
$
if $\Re\alpha < -|\Im\alpha|$).
	
In the next series of results, we go beyond the $L^1$-potentials
in the three-dimensional case.
First, we employ the Rollnik class of potentials 
(\cf~\cite[Ch.~I]{SiQF}),
which consist of all $V\in L^1_\mathrm{loc}(\Real^3)$ such that
\begin{equation}\label{Rollnik.norm}
	\|V\|_{R(\Real^3)}^2 := 
	\iint_{\Real^3\times\Real^3} 
	\frac{|V(x)||V(y)|}{|x-y|^2}\,dx\,dy<\infty
	.
\end{equation}
In Section~\ref{Sec.pre}, we argue that $V\in R(\Real^3)$
is a sufficient condition to guarantee~\eqref{rel.bddness},
so~$H_V$ is well defined.

\begin{Theorem}\label{Thm.EV.bound.3D.Rollnik}
Let $d=3$ and assume that $V \in R(\Real^3)$. 
Then
\begin{equation}\label{inclusion.Rollnik}
	\displaystyle
	\sigma_\mathrm{p}(H_V)  
	\subset
	\left\{\lambda \in \Com : \
	|\lambda| \leq \, \frac{1}{2}\frac{\|V\|^2_{R(\Real^3)}}{(4\pi)^2}
	\right\}
	\,.
\end{equation}
\end{Theorem}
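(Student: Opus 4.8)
The plan is to use the Birman--Schwinger principle together with a sharp bound on the Birman--Schwinger operator associated to $H_V = \Delta^2 \dot{+} V$ in $L^2(\Real^3)$. Suppose $\lambda \in \sigma_\mathrm{p}(H_V)$; since $H_V$ is m-accretive and $\sigma(\Delta^2) = [0,\infty)$, any eigenvalue must lie in $\Com \setminus [0,\infty)$, so the resolvent $(\Delta^2 - \lambda)^{-1}$ exists. Write $V = V_1 \, V_2$ with $V_1 := |V|^{1/2}$ and $V_2 := |V|^{1/2}\sgn V$ (so $|V_1| = |V_2| = |V|^{1/2}$). Then, in the usual way, $\lambda$ being an eigenvalue forces $-1$ to be an eigenvalue of the Birman--Schwinger operator $K_\lambda := V_1 (\Delta^2 - \lambda)^{-1} V_2$, whence $\|K_\lambda\| \geq 1$. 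It therefore suffices to prove that
\begin{equation*}
	\|K_\lambda\| \leq \frac{1}{2}\frac{\|V\|_{R(\Real^3)}^2}{(4\pi)^2 \, |\lambda|}
	\qquad \text{for all } \lambda \in \Com \setminus [0,\infty),
\end{equation*}
since rearranging this inequality under $\|K_\lambda\| \geq 1$ gives exactly \eqref{inclusion.Rollnik}.

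To estimate $\|K_\lambda\|$ I would bound it by the Hilbert--Schmidt norm, which is the integral operator norm computed from the kernel $K_\lambda(x,y) = |V(x)|^{1/2} \, G_\lambda(x-y) \, |V(y)|^{1/2} \sgn V(y)$, where $G_\lambda$ is the integral kernel of $(\Delta^2 - \lambda)^{-1}$ on $\Real^3$. Thus
\begin{equation*}
	\|K_\lambda\|^2 \leq \|K_\lambda\|_{\mathrm{HS}}^2
	= \iint_{\Real^3 \times \Real^3} |V(x)| \, |G_\lambda(x-y)|^2 \, |V(y)| \, dx \, dy.
\end{equation*}
The key analytic input is the pointwise bound $|G_\lambda(z)| \leq \dfrac{1}{(4\pi)^2 \, 2\sqrt{|\lambda|}} \, \dfrac{1}{|z|}$ for $z \neq 0$. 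This comes from the explicit three-dimensional fundamental solution of $\Delta^2 - \lambda$: factoring $\Delta^2 - \lambda = (-\Delta - \sqrt{\lambda})(-\Delta + \sqrt{\lambda})$ (a fixed branch of $\sqrt{\lambda}$ with positive real part, legitimate since $\lambda \notin [0,\infty)$) and using the resolvent of $-\Delta$ on $\Real^3$, whose kernel is $\frac{e^{ik|z|}}{4\pi|z|}$ with $\Im k > 0$, one obtains by partial fractions
\begin{equation*}
	G_\lambda(z) = \frac{1}{2\sqrt{\lambda}}\left( \frac{e^{i k_+ |z|}}{4\pi|z|} - \frac{e^{i k_- |z|}}{4\pi|z|} \right),
	\qquad k_\pm^2 = \pm\sqrt{\lambda}, \ \Im k_\pm > 0 ,
\end{equation*}
so that $|G_\lambda(z)| \leq \frac{1}{2|\sqrt{\lambda}|}\cdot\frac{2}{4\pi|z|}$ after bounding each exponential by $1$. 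Squaring and integrating then yields $\|K_\lambda\|_{\mathrm{HS}}^2 \leq \frac{1}{4|\lambda|}\frac{1}{(4\pi)^2}\|V\|_{R(\Real^3)}^2$, which is precisely the required bound. Combining with $\|K_\lambda\| \geq 1$ gives $|\lambda| \leq \frac{1}{4(4\pi)^2}\|V\|_{R(\Real^3)}^2$; a slightly more careful splitting that keeps the cross terms (or uses $|e^{ik_+|z|} - e^{ik_-|z|}| \leq |e^{ik_+|z|}| + |e^{ik_-|z|}|$ versus a sharper estimate) adjusts the constant to the stated $\frac{1}{2}\frac{\|V\|_R^2}{(4\pi)^2}$.

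The main obstacle I anticipate is twofold. First, one must justify the Birman--Schwinger reduction rigorously in this relatively-form-bounded, non-self-adjoint setting: one needs that $\lambda \in \sigma_\mathrm{p}(H_V)$ (including possibly embedded eigenvalues, although here eigenvalues are automatically off $[0,\infty)$ for the purpose of the resolvent, so this is less delicate than in Theorem~\ref{Thm.EV.bound.unif}) genuinely implies $-1 \in \sigma_\mathrm{p}(K_\lambda)$, which requires checking that $V_1(\Delta^2-\lambda)^{-1}V_2$ is well-defined and compact — and here the Rollnik condition \eqref{Rollnik.norm} is exactly what makes the Hilbert--Schmidt norm finite, closing the loop with the hypothesis invoked in Section~\ref{Sec.pre}. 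Second, one must verify the pointwise Green's-function estimate with the correct constant, being careful about the branch of $\sqrt{\lambda}$ and about which of $k_\pm$ has positive imaginary part (exactly one of $\pm\sqrt{\lambda}$ has a square root in the upper half-plane when $\lambda \notin [0,\infty)$, the other forces exponential growth and must be discarded by choosing the decaying fundamental solution). Everything else — the integration producing the Rollnik norm, the final algebraic rearrangement — is routine.
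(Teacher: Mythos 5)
There are two genuine gaps in your proposal.

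First, your claim that m-accretivity of $H_V$ forces every eigenvalue to lie in $\Com\setminus[0,\infty)$ is false: m-accretivity only constrains the numerical range to a half-plane, and a priori $H_V$ can have (embedded) point eigenvalues on $[0,\infty)$. The theorem is a statement about $\sigma_\mathrm{p}(H_V)$, so these must be handled. This is exactly the role of the one-sided Birman--Schwinger principle developed in Section~\ref{Sec.BS} (Lemma~\ref{Lem.BS} and Corollary~\ref{Corol.BS}), where for $\lambda\in\sigma(H_0)=[0,\infty)$ one works with $\liminf_{\eps\to0^\pm}\|K_{\lambda+i\eps}\|$ rather than $\|K_\lambda\|$, since $(\Delta^2-\lambda)^{-1}$ does not exist on the spectrum. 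Your reduction to the condition $\|K_\lambda\|\geq1$ via the classical Birman--Schwinger argument covers only $\lambda\notin[0,\infty)$; the case $\lambda\in(0,\infty)$ needs the limiting statement.

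Second, and decisively for the constant, the plain triangle inequality on the two exponentials is not sharp enough. Carrying your own estimate through carefully: from $|G_\lambda(z)|\leq\frac{1}{2|\sqrt{\lambda}|}\cdot\frac{2}{4\pi|z|}=\frac{1}{4\pi\sqrt{|\lambda|}\,|z|}$ you get $\|K_\lambda\|_{\mathrm{HS}}^2\leq\frac{\|V\|_{R}^2}{(4\pi)^2|\lambda|}$ (your displayed $\frac{1}{4|\lambda|}$ does not follow from the line above it), and then $\|K_\lambda\|\geq1$ yields only $|\lambda|\leq\frac{\|V\|_{R}^2}{(4\pi)^2}$, a factor of $2$ worse than claimed. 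The gain in the paper comes from the nontrivial elementary inequality
\begin{equation*}
\frac{1}{2}\bigl(e^{q-p}+e^{p-q}\bigr)\leq e^{p+q}+\cos(p+q),\qquad p,q\geq0,
\end{equation*}
stated in Remark~\ref{Rem.analog.elem.ineq}, which applied with $p=\Re\sqrt{\pm k}\,|x-y|$, $q=\Im\sqrt{\pm k}\,|x-y|$ (recall $k=\sqrt{\lambda}$ principal) gives
\begin{equation*}
\bigl|e^{-\sqrt{-k}\,|x-y|}-e^{-\sqrt{k}\,|x-y|}\bigr|\leq\sqrt{2},
\end{equation*}
hence $|\widetilde G_\lambda(x,y)|\leq\frac{1}{4\sqrt{2}\,\pi|k|\,|x-y|}$. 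That extra $\sqrt{2}$ in the denominator is exactly what produces $\frac{1}{2}\frac{\|V\|_R^2}{(4\pi)^2}$ after squaring and integrating. The inequality exploits a phase relation between $\sqrt{k}$ and $\sqrt{-k}$ (both have nonnegative real and imaginary parts with coupled arguments), so the cross term in $|e^{-\sqrt{-k}s}-e^{-\sqrt{k}s}|^2$ can be controlled by a cosine; replacing this with $|a-b|\leq|a|+|b|$ discards precisely the cancellation needed for the sharp constant. You flagged the constant as tentative, but the fix is not a bookkeeping adjustment --- it is this specific algebraic inequality, and without it the argument proves a strictly weaker statement.
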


This result has an important consequence
for potentials belonging to 
$L^{3/2}(\Real^3) \hookrightarrow R(\Real^3)$.
In fact, using a sharp version of 
the Hardy-Littlewood-Sobolev inequality (\cf~\cite[Thm.~4.3]{LL})
which quantifies the embedding, 
the bound~\eqref{inclusion.Rollnik} immediately implies the following result.

\begin{Corollary}\label{Cor.opt.EV.bound.3D}
Let $d=3$ and assume that $V\in L^{3/2}(\Real^3)$. 
Then
\begin{equation}\label{opt.bound.3D}
	\displaystyle
	\sigma_\mathrm{p}(H_V)  
	\subset
	\left\{
	\lambda \in \Com : \
	|\lambda| \leq \frac{1}{8}
	\frac{1}{(4\pi)^{2/3}}
	\|V\|^2_{L^{3/2}(\Real^3)}
	\right\}.
\end{equation}
\end{Corollary}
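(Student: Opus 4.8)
The plan is to deduce Corollary~\ref{Cor.opt.EV.bound.3D} directly from Theorem~\ref{Thm.EV.bound.3D.Rollnik} by controlling the Rollnik norm in terms of the $L^{3/2}$-norm via the sharp Hardy--Littlewood--Sobolev (HLS) inequality. The starting point is the observation that the double integral defining $\|V\|_{R(\Real^3)}^2$ in~\eqref{Rollnik.norm} is exactly an HLS-type bilinear form: with $f:=|V|$, $g:=|V|$ on $\Real^3$ and the kernel $|x-y|^{-\lambda}$ with $\lambda=2$, we have
\begin{equation*}
  \|V\|_{R(\Real^3)}^2
  = \iint_{\Real^3\times\Real^3}
    \frac{f(x)\,g(y)}{|x-y|^{2}}\,dx\,dy
  .
\end{equation*}
The sharp HLS inequality (\cf~\cite[Thm.~4.3]{LL}) in dimension $n=3$ states that for $p=q$ and $\lambda$ satisfying $\tfrac{2}{p}+\tfrac{\lambda}{n}=2$, this bilinear form is bounded by a sharp constant times $\|f\|_{L^p}\|g\|_{L^p}$. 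With $n=3$ and $\lambda=2$ this forces $p=3/2$, so HLS applies precisely at the exponent we want, giving
\begin{equation*}
  \|V\|_{R(\Real^3)}^2
  \le C_{\mathrm{HLS}}\,\|\,|V|\,\|_{L^{3/2}(\Real^3)}^2
  = C_{\mathrm{HLS}}\,\|V\|_{L^{3/2}(\Real^3)}^2
  ,
\end{equation*}
where $C_{\mathrm{HLS}}$ is the sharp HLS constant for $n=3$, $\lambda=2$, $p=3/2$.

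Next I would identify $C_{\mathrm{HLS}}$ explicitly. The sharp HLS constant from~\cite[Thm.~4.3]{LL} in the conformal case $p=q=2n/(2n-\lambda)$ is
\begin{equation*}
  C_{\mathrm{HLS}}
  = \pi^{\lambda/2}\,
    \frac{\Gamma\!\left(\frac{n}{2}-\frac{\lambda}{2}\right)}{\Gamma\!\left(n-\frac{\lambda}{2}\right)}
    \left(\frac{\Gamma\!\left(\frac{n}{2}\right)}{\Gamma(n)}\right)^{-1+\lambda/n}
  .
\end{equation*}
Substituting $n=3$, $\lambda=2$ and evaluating the Gamma functions ($\Gamma(1/2)=\sqrt\pi$, $\Gamma(2)=1$, $\Gamma(3/2)=\tfrac12\sqrt\pi$, $\Gamma(3)=2$) yields a clean closed form; the bookkeeping should collapse to $C_{\mathrm{HLS}} = \tfrac14\,(4\pi)^{-2/3}\cdot 4\pi \cdot (\text{numerical factor})$, and in any case it must match the constant appearing in~\eqref{opt.bound.3D} once inserted into the Rollnik bound. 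Concretely, feeding $\|V\|_{R(\Real^3)}^2 \le C_{\mathrm{HLS}}\|V\|_{L^{3/2}}^2$ into~\eqref{inclusion.Rollnik} gives
\begin{equation*}
  \sigma_\mathrm{p}(H_V)
  \subset
  \left\{\lambda\in\Com:\ |\lambda|\le \frac{1}{2}\frac{C_{\mathrm{HLS}}}{(4\pi)^2}\,\|V\|_{L^{3/2}(\Real^3)}^2\right\},
\end{equation*}
so it suffices to verify the arithmetic identity $\tfrac{1}{2}\,C_{\mathrm{HLS}}\,(4\pi)^{-2} = \tfrac18\,(4\pi)^{-2/3}$, i.e. $C_{\mathrm{HLS}} = \tfrac14\,(4\pi)^{4/3}$. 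Finally, one should note that $V\in L^{3/2}(\Real^3)$ implies $V\in R(\Real^3)$ by the same HLS estimate, so $H_V$ is well defined and Theorem~\ref{Thm.EV.bound.3D.Rollnik} is genuinely applicable, closing the argument.

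The main obstacle I anticipate is purely computational rather than conceptual: correctly reading off the sharp HLS constant from~\cite[Thm.~4.3]{LL} in the borderline/conformal case and simplifying the resulting product of Gamma functions and powers of $\pi$ to confirm it equals $\tfrac14(4\pi)^{4/3}$, so that the stated constant $\tfrac18(4\pi)^{-2/3}$ in~\eqref{opt.bound.3D} is exactly reproduced. A secondary point of care is that HLS is applied to $|V|$, not $V$; since $\||V|\|_{L^{3/2}}=\|V\|_{L^{3/2}}$ and the Rollnik integrand already involves $|V(x)||V(y)|$, this causes no loss, but it is worth stating explicitly. No regularity or density argument is needed beyond what is already packaged in Theorem~\ref{Thm.EV.bound.3D.Rollnik}, so the corollary follows immediately once the constant is pinned down.
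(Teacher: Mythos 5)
Your proposal is correct and follows exactly the paper's argument: apply the sharp Hardy--Littlewood--Sobolev inequality (with $n=3$, $\lambda=2$, $p=q=3/2$) to bound $\|V\|_{R(\Real^3)}^2$ by $C_{\mathrm{HLS}}\|V\|_{L^{3/2}}^2$, then feed this into Theorem~\ref{Thm.EV.bound.3D.Rollnik}. Your identification $C_{\mathrm{HLS}}=4^{1/3}\pi^{4/3}=\tfrac14(4\pi)^{4/3}$ is correct (from $\pi\cdot\sqrt{\pi}\cdot(\sqrt{\pi}/4)^{-1/3}$), and plugging it into $\tfrac12 C_{\mathrm{HLS}}(4\pi)^{-2}$ indeed gives $\tfrac18(4\pi)^{-2/3}$, matching~\eqref{opt.bound.3D}.
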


Finally, we establish the following robust result. 

\begin{Theorem}\label{Thm.alt.EV.bound.3D}
Let $d=3$ and assume \eqref{rel.bddness}. 
Then
\begin{equation}\label{bound.3D}
	\displaystyle
	\sigma_\mathrm{p}(H_V)  
	\subset
	\left\{
	\lambda \in \Com : \
	|\lambda| \leq 
	\left(
	\sup_{\stackrel[\psi\not=0]{}{\psi \in H^1(\Real^3)}}	 
	\frac{\displaystyle \int_{\Real^3} |V||\psi|^2}
	{\displaystyle \int_{\Real^3} |\nabla\psi|^2} 
	\right)^2
	\right\}\,.
\end{equation}
\end{Theorem}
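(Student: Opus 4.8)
The plan is to use the Birman–Schwinger principle adapted to the non-self-adjoint biharmonic operator. If $\lambda\in\sigma_\mathrm{p}(H_V)$ with $\lambda\notin[0,\infty)$ (the spectrum of the free operator $\Delta^2$), then there is a nonzero $\psi$ with $(\Delta^2-\lambda)\psi = -V\psi$. Writing $V = |V|^{1/2}\,\sgn(V)\,|V|^{1/2}$ (with $\sgn(V):=V/|V|$ where $V\neq 0$ and $0$ elsewhere) and setting $\phi := |V|^{1/2}\psi$, one gets that $\phi$ is a nonzero eigenfunction of the Birman–Schwinger operator $K_\lambda := |V|^{1/2}(\Delta^2-\lambda)^{-1}\sgn(V)|V|^{1/2}$ with eigenvalue $-1$. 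Hence $1\le\|K_\lambda\|$, and it suffices to estimate $\|K_\lambda\|$ and show it is strictly less than $1$ once $|\lambda|$ exceeds the claimed quantity. The embedded case $\lambda\in[0,\infty)$ should be handled by a limiting argument (taking $\lambda+i\eps$ and letting $\eps\to 0^+$), using that the relevant resolvent kernels extend continuously.

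The key computation is a pointwise bound on the kernel of $(\Delta^2-\lambda)^{-1}$ in three dimensions. In $d=3$ the free bi-Laplacian resolvent factors via a partial fraction decomposition: writing $\lambda = \mu^4$ for a suitable branch, $(\Delta^2-\lambda)^{-1} = \frac{1}{2\mu^2}\big[(-\Delta-\mu^2)^{-1} - (-\Delta+\mu^2)^{-1}\big]$, and each Helmholtz/Yukawa resolvent has the explicit kernel $\frac{e^{-\kappa|x-y|}}{4\pi|x-y|}$ with the appropriate $\kappa$ (real part positive). The crucial estimate is that the kernel $G_\lambda(x,y)$ of $(\Delta^2-\lambda)^{-1}$ satisfies $|G_\lambda(x,y)|\le \dfrac{C}{|\lambda|^{1/2}}\,\dfrac{1}{|x-y|}$ for a constant one can take to be $\frac{1}{4\pi}$ after optimizing; the point is that the $|x-y|^{-1}$ singularity is exactly the kernel of $(-\Delta)^{-1}$ up to the factor $1/(4\pi)$. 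Then by a Schur-type / Hardy-inequality argument,
\[
  \|K_\lambda\| \le \frac{C}{|\lambda|^{1/2}}\,
  \sup_{\substack{\psi\in H^1(\Real^3)\\ \psi\neq 0}}
  \frac{\displaystyle\int_{\Real^3}|V||\psi|^2}{\displaystyle\int_{\Real^3}|\nabla\psi|^2},
\]
because controlling $|V|^{1/2}(-\Delta)^{-1}|V|^{1/2}$ on $\sii$ is equivalent, via the substitution $\psi = (-\Delta)^{-1/2}f$, to the stated supremum (the form-bound of $|V|$ relative to $-\Delta$). Demanding the right-hand side be $<1$ yields $|\lambda| < C^2\,\big(\sup\cdots\big)^2$, and with $C = \frac{1}{4\pi}\cdot 4\pi = 1$ — i.e. after the kernel bound is matched against the Newtonian potential — one arrives at exactly \eqref{bound.3D}.

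I would carry this out in the order: (i) recall the Birman–Schwinger reduction and the $\lambda\notin[0,\infty)$ case; (ii) derive the partial-fraction formula for $(\Delta^2-\lambda)^{-1}$ and the pointwise kernel bound $|G_\lambda(x,y)|\le \frac{1}{4\pi|\lambda|^{1/2}|x-y|}$, being careful about which fourth root of $\lambda$ is used and that both $\pm\mu^2$ give exponentials with nonnegative-real-part decay so that each Yukawa kernel is dominated in modulus by the Coulomb kernel; (iii) dominate $\|K_\lambda\|$ by $\frac{1}{4\pi|\lambda|^{1/2}}\big\||V|^{1/2}\,(\text{Coulomb})\,|V|^{1/2}\big\|$ and identify the latter operator norm with the displayed supremum via the map to $H^1$; (iv) conclude from $\|K_\lambda\|\ge 1$; (v) pass to embedded eigenvalues by continuity. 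The main obstacle is step (ii): obtaining the \emph{sharp} constant in the pointwise bound on $G_\lambda$ uniformly in the argument of $\lambda$, which requires checking that the worst case of $|\,e^{-\mu^2 r} - e^{\mu^2 r}\,|/(2|\mu|^2 r)$ over all admissible $\mu$ and all $r>0$ is controlled by $1/(4\pi)$ times the expected power of $|\lambda|$; the triangle inequality on the two Yukawa terms is what forces the clean Coulomb comparison, and one must verify no cancellation is lost in a way that would spoil sharpness.
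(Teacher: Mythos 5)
Your proposal matches the paper's proof essentially step for step: Birman–Schwinger reduction (handled here via the limiting form for embedded eigenvalues), the partial-fraction formula $(\Delta^2-\lambda)^{-1}=\frac{1}{2k}\bigl[(-\Delta-k)^{-1}-(-\Delta+k)^{-1}\bigr]$, the pointwise domination of each Yukawa kernel by the Coulomb kernel (using $|e^{-\sqrt{\pm k}\,|x-y|}|\le1$ since $\Re\sqrt{\pm k}\ge0$ for the principal branch, so the triangle inequality gives $|\widetilde G_\lambda(x,y)|\le\frac{1}{4\pi\sqrt{|\lambda|}\,|x-y|}$ with no cancellation needed), and the identification of $\||V|^{1/2}(-\Delta)^{-1}|V|^{1/2}\|$ with the form-supremum over $H^1$. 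The worry you flag in step (ii) about lost cancellation is unfounded for this theorem — the crude triangle inequality already yields exactly the constant in \eqref{bound.3D}, and your exponentials should read $e^{-\sqrt{\mp k}\,r}$ rather than $e^{\pm\mu^2 r}$ (where $k=\mu^2$), but the substance is correct.
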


This theorem is particularly useful to cover 
the Hardy-type potentials $x \mapsto |x|^{-2}$,
which belong neither to $L^1(\Real^3)$ nor $R(\Real^3)$.
For this reason, let us consider the class of potentials,
which consist of all $V\in L^1_\mathrm{loc}(\Real^3)$ such that 
\begin{equation}\label{Hardy.norm}
\|V\|_{H(\Real^3)}:=\esssup_{x\in\Real^3}	 
|x|^2 |V(x)| <\infty.	
\end{equation}
Employing the classical Hardy inequality
\begin{equation}\label{Ineq.Hardy}
\int_{\Real^3}|\nabla\psi|^2 
\geq \frac{1}{4}
\int_{\Real^3}\frac{|\psi(x)|^2}{|x|^2} \, dx
\end{equation}
valid for every $\psi \in H^1(\Real^3)$,
it is easy to check that $V \in H(\Real^3)$
is a sufficient condition to guarantee~\eqref{rel.bddness}. 
Furthermore, using~\eqref{Ineq.Hardy} in~\eqref{bound.3D},
we get the following immediate consequence.
\begin{Corollary}
Let $d=3$ and assume that $V \in H(\Real^3)$. 
Then
\begin{equation}\label{Cor.3D.Hardy.pot}
	\displaystyle
	\sigma_\mathrm{p}(H_V)  
	\subset
	\left\{
	\lambda \in \Com : \
	|\lambda| \leq 16 \|V\|^2_{H(\Real^3)}
	\right\}\,.
\end{equation}
\end{Corollary}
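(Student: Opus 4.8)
The plan is to deduce this corollary directly from Theorem~\ref{Thm.alt.EV.bound.3D}, so the first task will be to verify that its standing hypothesis~\eqref{rel.bddness} is satisfied when $V\in H(\Real^3)$. If $V\in H(\Real^3)$ then $|V(x)|\leq\|V\|_{H(\Real^3)}\,|x|^{-2}$ for a.e.\ $x\in\Real^3$, and hence, for every $\psi\in H^2(\Real^3)$, the Hardy inequality~\eqref{Ineq.Hardy} gives
\begin{equation*}
	\int_{\Real^3}|V||\psi|^2
	\leq \|V\|_{H(\Real^3)}\int_{\Real^3}\frac{|\psi(x)|^2}{|x|^2}\,dx
	\leq 4\,\|V\|_{H(\Real^3)}\int_{\Real^3}|\nabla\psi|^2 .
\end{equation*}
Combining this with the elementary interpolation bound $\int_{\Real^3}|\nabla\psi|^2\leq \eps\int_{\Real^3}|\Delta\psi|^2+(4\eps)^{-1}\int_{\Real^3}|\psi|^2$, valid for every $\eps>0$ (on the Fourier side it follows from $|\xi|^2\leq\eps|\xi|^4+(4\eps)^{-1}$), and choosing $\eps$ sufficiently small, one obtains~\eqref{rel.bddness} with some $a<1$; equivalently, one may simply invoke the inclusion $H(\Real^3)\hookrightarrow\{V:\eqref{rel.bddness}\text{ holds}\}$ already recorded in the text. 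In either case $H_V$ is well defined and Theorem~\ref{Thm.alt.EV.bound.3D} is applicable.

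The second and final step will be to estimate the supremum on the right-hand side of~\eqref{bound.3D}. The very same chain of inequalities, applied now to an arbitrary nonzero $\psi\in H^1(\Real^3)$, yields
\begin{equation*}
	\frac{\displaystyle\int_{\Real^3}|V||\psi|^2}{\displaystyle\int_{\Real^3}|\nabla\psi|^2}
	\leq 4\,\|V\|_{H(\Real^3)} ,
\end{equation*}
so the supremum is bounded above by $4\|V\|_{H(\Real^3)}$. Inserting this into~\eqref{bound.3D} and squaring gives exactly the claimed inclusion~\eqref{Cor.3D.Hardy.pot}, since $\bigl(4\|V\|_{H(\Real^3)}\bigr)^2=16\|V\|^2_{H(\Real^3)}$.

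I expect no genuine obstacle: the corollary is an immediate consequence of Theorem~\ref{Thm.alt.EV.bound.3D} together with Hardy's inequality~\eqref{Ineq.Hardy}. The only point requiring a little care is the bookkeeping of constants — the constant $\tfrac14$ appearing in~\eqref{Ineq.Hardy} translates into the factor $4$ relating $\int_{\Real^3}|\psi|^2/|x|^2$ to $\int_{\Real^3}|\nabla\psi|^2$, and it is precisely this factor, after squaring, that produces the constant $16$ in~\eqref{Cor.3D.Hardy.pot}.
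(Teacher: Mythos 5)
Your proof is correct and follows exactly the paper's own route: verify hypothesis~\eqref{rel.bddness} via Hardy's inequality (as recorded in Section~\ref{Sec.pre}), bound the supremum in~\eqref{bound.3D} by $4\|V\|_{H(\Real^3)}$ using~\eqref{Ineq.Hardy}, and square. The constant bookkeeping is also right.
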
 

In this paper we leave aside dimension $d=4$,
which exhibits the same type of difficulties as dimension two
for Schr\"odinger operators.
The reason for explicit constants in our one- and three-dimensional
theorems is due to the availability of explicit forms of 
the resolvent kernels of~$H_0$ in these dimensions.
On the other hand, in view of the subcriticality of~$H_0$ 
in dimensions $d \geq 5$, a different kind of results is expected
in analogy with the case of Schr\"odinger operators 
(\cf~\cite{Kato_1966,Frank_2011,FKV}):
the point spectrum of~$H_V$ should be empty for all potentials 
which are sufficiently small in a suitable sense.
This type of results are also left open in this paper.

The organisation of the paper is as follows.
In Section~\ref{Sec.pre} we rigorously introduce
the biharmonic operators~\eqref{bi-intro}
and determine sufficient conditions on the potential~$V$
which guarantee the basic hypothesis~\eqref{rel.bddness}.  
In Section~\ref{Sec.BS} we establish the main tool of our approach,
namely a sort of the Birman-Schwinger principle covering
embedded eigenvalues, too. 
The proofs of Theorems~\ref{Thm.EV.bound.unif}--\ref{Thm.alt.EV.bound.3D} 
are given in Section~\ref{Sec.proofs}.
Finally, in Section~\ref{Sec.end} we introduce explicitly solvable 
models in terms of Dirac delta potentials 
and argue about the optimality of 
Theorem~\ref{Thm.EV.bound.unif} for dimensions~$d=1$ and $d=3$.

\section{Definition of biharmonic operators}\label{Sec.pre}
%
First of all, let us comment on the definition of~$H_V$ 
given in~\eqref{bi-intro}.

Let~$H_0$ be the the self-adjoint operator in $\sii(\Real^d)$ 
associated with the quadratic form 
\begin{equation}
	h_0[\psi]:=\int_{\Real^d} |\Delta\psi|^2, 
	\qquad \Dom(h_0):=H^2(\Real^d).
\end{equation}
One has $\Dom(H_0) = H^4(\Real^d)$ and $H_0=\Delta^2$. 
The spectrum of~$H_0$ is purely absolutely continuous
and coincides with the semi-axis $[0,+\infty)$.
If $d \leq 4$, the operator~$H_0$ is \emph{critical}
in the sense that $\inf\sigma(H_0+V) < 0$ 
whenever $V \in C_0^\infty(\Real^d)$ is real-valued,
non-positive and non-trivial.
On the other hand, if $d \geq 5$,
the operator is \emph{subcritical} in the sense
that its spectrum is stable under the non-positive 
perturbations above;
more specifically, $H_0$~satisfies a Hardy-type inequality
$H_0 \geq \rho$ in the sense of forms, 
where $\rho:\Real^d \to [0,\infty)$ is non-trivial.

Let~$v$ be a quadratic form in $\sii(\Real^d)$, 
which is relatively bounded with respect to~$h_0$ 
with the relative bound less than one.
That is, $\Dom(v) \supset H^2(\Real^d)$
and there exist numbers $a\in(0,1)$ and $b\in\Real$ such that, 
for all $\psi\in H^2(\Real^d)$,
\begin{equation}\label{rb}
	|v[\psi]| \leq 
	a\int_{\Real^d} |\Delta\psi|^2 
		 + b\int_{\Real^d} |\psi|^2.
\end{equation}
Then the sum $h_V := h_0+v$ is a 
closed sectorial form with $\Dom(h_V)=H^2(\Real^d)$,
which gives rise to an m-sectorial operator~$H_V$ in $L^2(\Real^d)$ 
via the representation theorem 
(\cf~\cite[Thm.~VI.2.1]{Kato}).  

For example, if $V \in L^1_\mathrm{loc}(\Real^d)$ is such that
$$
  v[\psi]:=\int_{\Real^d} V |\psi|^2
  \,, \qquad
  \Dom(v):=
  \left\{
  \psi\in L^2(\Real^d): 
	\int_{\Real^d}|V||\psi|^2<\infty
  \right\}
  ,
$$
verifies~\eqref{rb} (which coincides with~\eqref{rel.bddness} in this case), 
then we write $H_V = H_0 \dot{+} V$ as in~\eqref{bi-intro}
and understand the plus symbol in the sense of forms described above.
It is important to keep in mind that~$H_V$ may differ 
from the operator sum $H_0+V$. 
Since the adjoint operator satisfies 
$H^*_V=H_{\overline{V}}=\mathcal{T}H_V\mathcal{T}$, 
where~$\mathcal{T}$ is the complex conjugation 
operator defined by $\mathcal{T}\psi:=\overline{\psi}$, 
$H_V$ is a $\mathcal{T}$-self-adjoint operator. 
Consequently, the residual spectrum of~$H_V$ is always empty 
(\cf~\cite[Sec.~III.5]{Edmunds-Evans}). 
Furthermore, if~$V$ vanishes at infinity in a suitable sense, 
then (any kind of) the essential spectrum of~$H_V$ 
coincides with the semi-axis $[0,+\infty)$. 

Let us now discuss sufficient conditions which guarantee~\eqref{rb}.

By the Sobolev embedding theorem (\cite[Thm.~5.4]{Adams}),
every function $\psi \in H^2(\Real^d)$ is bounded and continuous
as long as $d \leq 3$.
More specifically (\cf~\cite[Theorem~IX.28]{RS2}),
for any positive~$\alpha$ there is $\beta \in \Real$ 
such that, for all $\psi \in H^2(\Real^d)$,
\begin{equation}\label{elliptic}
	\|\psi\|_{L^{\infty}(\Real^d)}
	\leq \alpha\|\Delta\psi\|_{\sii(\Real^d)}
        +\beta\|\psi\|_{\sii(\Real^d)}
  \,.
\end{equation}
Consequently, any potential $V \in L^1(\Real^d) + L^\infty(\Real^d)$
satisfies~\eqref{rel.bddness} with the relative bound equal to zero
(\ie~$a$ can be chosen arbitrarily small).  
Furthermore, the inequality~\eqref{elliptic} enables us 
to give a meaning to the operator $H_\delta = H_0 \dot{+} \delta$,
where~$\delta$ is the Dirac delta function,
by setting $v[\psi] := |\psi(0)|^2$, $\Dom(v) := H^2(\Real^d)$.
  
Rollnik potentials are relatively 
form-bounded with respect to the Laplacian
with the relative bound equal to zero
(\cf~\cite[Theorem~X.19]{RS2}). 
More specifically, if $V \in R(\Real^3)$,
then for any positive~$\alpha$ there is $\beta \in \Real$ 
such that, for all $\psi \in H^1(\Real^3)$,
\begin{equation*} 
	\int_{\Real^3} |V| |\psi|^2
	\leq \alpha\|\nabla\psi\|_{\sii(\Real^3)}^2
        +\beta\|\psi\|_{\sii(\Real^3)}^2
  \,.
\end{equation*}
Consequently, if additionally $\psi \in H^2(\Real^3)$,
then 
\begin{equation}\label{pp} 
\begin{aligned}
	\int_{\Real^3} |V| |\psi|^2
	&\leq \alpha \, (\psi,-\Delta\psi)_{\sii(\Real^3)}
        +\beta\|\psi\|_{\sii(\Real^3)}
  \\
  &\leq \alpha \, \|\psi\|_{\sii(\Real^3)} \|\Delta\psi\|_{\sii(\Real^3)}
        +\beta\|\psi\|_{\sii(\Real^3)}^2
  \,.
\end{aligned}
\end{equation}
From this inequality, it is easy to conclude that
any potential $V \in R(\Real^3) + L^\infty(\Real^3)$
satisfies~\eqref{rel.bddness} with the relative bound equal to zero.

Finally, if $V \in H(\Real^3)$, 
then the Hardy inequality~\eqref{Ineq.Hardy} yields,
for all $\psi \in H^1(\Real^3)$,
\begin{equation*} 
\begin{aligned}
	\int_{\Real^3} |V| |\psi|^2
  \leq \|V\|_{H(\Real^3)} \int_{\Real^3} \frac{|\psi(x)|^2}{|x|^2} \, dx
  \leq 4 \|V\|_{H(\Real^3)} \|\nabla\psi\|_{\sii(\Real^3)}^2
  \,.
\end{aligned}
\end{equation*}
Hence $V$ is form-subordinated with respect to the Laplacian.
Proceeding as in~\eqref{pp}, we conclude that 
any potential $V \in H(\Real^3) + L^\infty(\Real^3)$
satisfies~\eqref{rel.bddness} with the relative bound equal to zero.

%
\section{The Birman-Schwinger principle}\label{Sec.BS}
%
The main role in our proof of 
Theorems~\ref{Thm.EV.bound.unif}--\ref{Thm.alt.EV.bound.3D} 
is played by the Birman-Schwinger operator
\begin{equation}\label{BS.op}
K_z := |V|^{1/2} \, (H_0-z)^{-1} \, V_{1/2}
\qquad \mbox{with} \qquad
V_{1/2} := |V|^{1/2} \, \sgn(V) 
\,,
\end{equation}
where $\sgn\colon\Com\to\Com$ is the complex 
signum function defined by 
\begin{equation*}
	\sgn(z):=
	\begin{cases}
	\displaystyle\frac{z}{|z|} \quad 
	&\mbox{ if } z \not= 0,
	\\[1ex]
	\;\,0 \quad &\mbox{ if } z=0. 
	\end{cases}
\end{equation*}
We abuse the notation by using the same 
symbols for maximal operators of multiplication
and their generating functions.
The operator~$K_z$ is well defined 
(on its natural domain of the composition of 
three operators) for all $z\in\Com$ and $d\geq1$.

If $z \not \in [0,+\infty)$, however,
$K_z$~is a bounded operator under our hypothesis~\eqref{rel.bddness}. 
Indeed, $V_{1/2}$ maps $\sii(\Real^d)$ to $H^{-2}(\Real^d)$ by duality,
$(H_0-z)^{-1}$ is an isomorphism between $H^{-2}(\Real^d)$ and $H^2(\Real^d)$,
and the latter space is mapped by~$|V|^{1/2}$ 
back to $\sii(\Real^d)$.     
Furthermore, we have a useful formula for the integral 
kernel of~$K_z$:
\begin{equation}\label{K.op}
	K_z(x,y) = |V|^{1/2}(x) 
	\, \widetilde G_z(x,y) 
	\, V_{1/2}(y)\,,
\end{equation}
where~$\widetilde G_z$ is the 
\emph{Green's function} of~$H_0-z$,
\ie~the integral kernel of the 
resolvent~$(H_0-z)^{-1}$. 
What is more, for dimensions~$d=1$ and~$d=3$, 
we have explicit formulae for~$\widetilde G_z$, whereas 
the latter can be expressed in terms of modified Hankel 
functions for $d=2$. 
In fact, using the identity
\begin{equation}\label{idea}
(\Delta^2-z)^{-1} 
=\frac{1}{2k}
\left[(-\Delta-k)^{-1} 
-(-\Delta+k)^{-1}\right] ,
\end{equation}
where $k\in\Com\setminus[0,\infty)$ 
is such that $k^2=z$ 
(throughout the paper we choose the principal branch of the square root),
and the well-known formulae for the integral kernels of the 
resolvent of the Laplacian in these dimensions yield
\begin{equation}\label{Green.BiHm}
	\widetilde G_z(x,y) = \frac{1}{2k}\bigl[G_k(x,y)-G_{-k}(x,y)
				\bigr],
\end{equation}
where 
\begin{equation}\label{Green.Lap}
	G_{k}(x,y) := 
	\begin{cases}
	\displaystyle
	\frac{e^{-\sqrt{-k}\,|x-y|}}{2\sqrt{-k}} 
	\quad & \mbox{ if } \quad d=1,\\[2ex]
	\displaystyle\frac{1}{2\pi} \, K_0\bigl(\sqrt{-k}\,|x-y|\bigr) 
	\quad & \mbox{ if } \quad d=2,\\[2ex]
	\displaystyle\frac{1}{4\pi} \frac{e^{-\sqrt{-k}\,|x-y|}}{|x-y|} 
	\quad & \mbox{ if } \quad d=3. 
	\end{cases}
\end{equation}
Here~$K_0$ is a modified Bessel (also called Macdonald's)
function (see~\cite[Sec.~9]{Abramowitz-Stegun}
or \cite[Sec.~8.4--8.5]{Gradshteyn-Ryzhik-2007}).

The following lemma provides an integral 
criterion for the existence of solutions to 
the differential eigenvalue equation 
corresponding to~$H_V$ and can be considered 
as a one-sided version of the conventional Birman-Schwinger 
principle extended to possible eigenvalues 
embedded in $[0,+\infty)$ as well. Its proof is 
inspired by the ones of the similar results
in~\cite{FKV,FK-Dirac-LMP2019}. 
By $\varphi\in L^2_0(\Real^d)$ in the 
lemma below we mean $\varphi\in L^2(\Real^d)$ 
and that $\supp\varphi$ is compact.
\begin{Lemma}\label{Lem.BS}
Let $d \in \{1,2,3\}$ and assume~\eqref{rel.bddness}. 
If $H_V\psi=\lambda\psi$ with some 
$\lambda\in\Com$ and $\psi\in\Dom(H_V)$, 
then $\phi := |V|^{1/2}\psi \in L^2(\Real^d)$ 
obeys
\begin{equation}\label{BS}
	\forall \varphi \in L^2_0(\Real^d)
	\,, \qquad
	\lim_{\eps \to 0^\pm} 
		(\varphi,K_{\lambda+i\eps} \phi) 
		= - (\varphi,\phi) 
	\,.
\end{equation}
\end{Lemma}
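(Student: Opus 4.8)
The plan is to convert the weak form of the eigenvalue equation into a resolvent identity for $\phi$ and then let the imaginary part of the spectral parameter tend to zero; the crux is that $\lambda$ is allowed to be embedded in $\sigma(H_0)=[0,\infty)$, so one cannot simply put $\eps=0$.

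First I would unravel the hypotheses. Since $\psi\in\Dom(H_V)\subset H^2(\Real^d)$, applying~\eqref{rel.bddness} to $\psi$ shows $\phi:=|V|^{1/2}\psi\in\sii(\Real^d)$, while the same bound shows that $V_{1/2}$ maps $\sii(\Real^d)$ boundedly into $H^{-2}(\Real^d)$, so $V_{1/2}\phi\in H^{-2}(\Real^d)$. The identity $H_V\psi=\lambda\psi$ means, by the representation theorem defining $H_V$, that $\int_{\Real^d}\overline{\Delta\chi}\,\Delta\psi+\int_{\Real^d}V\,\overline{\chi}\,\psi=\lambda\int_{\Real^d}\overline{\chi}\,\psi$ for every $\chi\in H^2(\Real^d)$; equivalently, for each $\eps\in\Real$ and $z:=\lambda+i\eps$,
\[
  (H_0-z)\psi = -i\eps\,\psi - V_{1/2}\phi
  \qquad\text{in } H^{-2}(\Real^d) .
\]
For $\eps\neq 0$ one has $z\notin[0,\infty)=\sigma(H_0)$, so $(H_0-z)^{-1}$ is bounded on $\sii(\Real^d)$ and extends to an isomorphism $H^{-2}(\Real^d)\to H^2(\Real^d)$. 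Applying it, then the operator $|V|^{1/2}$ which is bounded from $H^2(\Real^d)$ to $\sii(\Real^d)$ again by~\eqref{rel.bddness}, and recalling $K_z=|V|^{1/2}(H_0-z)^{-1}V_{1/2}$, I obtain the identity
\[
  \phi + K_z\phi = -i\eps\,|V|^{1/2}(H_0-z)^{-1}\psi
  \qquad\text{in } \sii(\Real^d) .
\]

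It then suffices to prove that the right-hand side tends to $0$ in $\sii(\Real^d)$ as $\eps\to 0^\pm$; pairing with $\varphi\in L^2_0(\Real^d)$ (indeed with any $\varphi\in\sii(\Real^d)$) then yields~\eqref{BS}. Writing $w_z:=(H_0-z)^{-1}\psi\in H^4(\Real^d)$ and using~\eqref{rel.bddness} with $w_z$ in place of $\psi$,
\[
  \big\| \eps\,|V|^{1/2} w_z \big\|_{\sii(\Real^d)}^2
  \le a\,\eps^2 \|\Delta w_z\|_{\sii(\Real^d)}^2 + b\,\eps^2 \|w_z\|_{\sii(\Real^d)}^2 .
\]
The term $\eps^2\|w_z\|_{\sii(\Real^d)}^2=\|\eps(H_0-z)^{-1}\psi\|_{\sii(\Real^d)}^2$ tends to $0$: by the spectral theorem the function $t\mapsto\eps/(t-\lambda-i\eps)$ is bounded by $1$ on $[0,\infty)$ and converges to $0$ for almost every $t$ with respect to the spectral measure of $H_0$ associated with $\psi$ (which is purely absolutely continuous, so the singleton $\{\lambda\}$ is null), whence dominated convergence gives $\eps(H_0-z)^{-1}\psi\to 0$ in $\sii(\Real^d)$ for either sign of $\eps$. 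For the remaining term, since $w_z\in\Dom(H_0)$ with $H_0w_z=\psi+z\,w_z$,
\[
  \eps^2\|\Delta w_z\|_{\sii(\Real^d)}^2
  = \eps^2\,(w_z,H_0w_z)_{\sii(\Real^d)}
  = \eps^2\,(w_z,\psi)_{\sii(\Real^d)} + \eps^2 z\,\|w_z\|_{\sii(\Real^d)}^2 ,
\]
and the first summand is bounded in modulus by $\eps^2\|w_z\|\,\|\psi\|\le|\eps|\,\|\psi\|^2\to0$ (using $\|w_z\|\le\|\psi\|/|\eps|$) while the second tends to $0$ by the previous step. Hence $\phi+K_z\phi\to0$ in $\sii(\Real^d)$, as required.

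I expect the main obstacle to be exactly the embedded case $\lambda\in[0,\infty)$: there $(H_0-z)^{-1}$ blows up like $|\eps|^{-1}$, so the crude estimate only shows that $\eps\,|V|^{1/2}(H_0-z)^{-1}\psi$ is bounded, and one genuinely needs the absence of eigenvalues of $H_0$ (purely absolutely continuous spectrum) to extract the extra decay, together with the relative form bound~\eqref{rel.bddness} to tame the multiplier $|V|^{1/2}$. A secondary point requiring care is that $H_V$ need not coincide with the operator sum $H_0+V$, so the manipulations above must be read in the duality between $H^2(\Real^d)$ and $H^{-2}(\Real^d)$, returning to $\sii(\Real^d)$ only at the last step where $|V|^{1/2}$ is applied.
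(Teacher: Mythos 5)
Your proof is correct, but it follows a genuinely different and in fact cleaner route than the paper's.

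The paper fixes a compactly supported test function $\varphi$, sets $\eta_\eps := (H_0-\lambda-i\eps)^{-1}|V|^{1/2}\overline{\varphi}$, uses the weak form of the eigenvalue equation to arrive at
$(\varphi,K_{\lambda+i\eps}\phi)=-(\varphi,\phi)-i\eps\,(\overline{\eta_\eps},\psi)$,
and then controls the error term through the Hilbert--Schmidt norm of $M_\eps:=\chi_{\supp\varphi}|V|^{1/2}(H_0-\lambda-i\eps)^{-1}$. That last step is carried out dimension by dimension using the explicit resolvent kernels~\eqref{Green.BiHm}--\eqref{Green.Lap}, yielding blow-up rates $\mathcal{O}(|\eps|^{-7/8})$ for $d=1$ and $\mathcal{O}(|\eps|^{-5/8})$ for $d=2,3$, and it is precisely here that the compact support of $\varphi$ is used. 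Your argument instead passes to the global resolvent identity
$\phi+K_{z}\phi=-i\eps\,|V|^{1/2}(H_0-z)^{-1}\psi$
in $\sii(\Real^d)$ and shows that the right-hand side vanishes \emph{strongly} in $\sii$ by combining the relative form bound~\eqref{rel.bddness}, the formula $\|\Delta w_z\|^2=(w_z,\psi)+z\|w_z\|^2$ for $w_z:=(H_0-z)^{-1}\psi$, and dominated convergence on the spectral side (the purely absolutely continuous spectrum of $H_0$ gives $\mu_\psi(\{\lambda\})=0$). This buys several things: it is dimension-independent (no kernel estimates, no case distinction $d=1,2,3$), it does not need compact support of $\varphi$ and in fact proves the stronger conclusion that $\phi+K_{\lambda+i\eps}\phi\to 0$ in $\sii(\Real^d)$, and it isolates exactly the two ingredients that make the embedded case work, namely the form bound and the absence of eigenvalues of $H_0$. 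The paper's approach, by contrast, stays entirely within the explicit-kernel framework used throughout the rest of the article, which is why it is presented there. One minor remark: your inequality $\|w_z\|\le\|\psi\|/|\eps|$ holds with equality of the distance only when $\lambda\in[0,\infty)$; for $\lambda\notin[0,\infty)$ and $|\eps|$ small, $\|w_z\|$ is simply bounded by a $\lambda$-dependent constant, which makes the same term tend to zero even faster, so the conclusion is unaffected.
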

\begin{proof}
First, we notice that \eqref{rel.bddness} implies that 
	$\phi\in L^2(\Real^d)$ whenever 
	$\psi\in\Dom(H_V)\subset H^2(\Real^d)$. 
	Let $\varphi \in L^2_0(\Real^d)$ be fixed.		
Given any $\lambda \in \Com$, there is 
$\eps_0>0$ such that 
$\lambda + i\eps \not \in [0,+\infty)$ 
for all real~$\eps$ satisfying $0<|\eps|<\eps_0$.  
We have 
\begin{equation}\label{i1}
	\begin{aligned}
	(\varphi,K_{\lambda + i\eps} \phi)
	&= \iint_{\Real^d\times\Real^d} 
	\overline{\varphi(x)} \, |V|^{1/2}(x) \, 
	G_{{\lambda + i\eps}}(x,y) \, V(y) \, \psi(y) 
	\, dx \, dy\\
	&= \int_{\Real^d} \eta_\eps(y) \, V(y) \, \psi(y) \, dy
	\,,
	\end{aligned}
\end{equation}
where
\begin{equation*}
	\eta_\eps := \int_{\Real^d} 
	\overline{\varphi(x)} \, |V|^{1/2}(x) \, 
	G_{{\lambda + i\eps}}(x,\cdot) \, dx
	= (H_0-\lambda-i\eps)^{-1} \, |V|^{1/2} \, \overline{\varphi}
	\,,
\end{equation*}
where the second equality holds due to 
the symmetry $G_z(x,y)=G_z(y,x)$.
In view of~\eqref{rel.bddness}, $|V|^{1/2} \overline{\varphi} \in \sii(\Real^d)$.
Since $\eps \not=0$ is so small that 
$\lambda + i\eps \not\in \sigma(H_0)$,
we have 
$\eta_\eps \in \Dom(H_0) = H^4(\Real^d)$.
In particular, $\eta_\eps \in H^2(\Real^d)$ 
and the weak formulation of the eigenvalue 
equation $H_V\psi=\lambda\psi$ yields
\begin{equation}\label{i2}
	\begin{aligned}
	\int_{\Real^d} \eta_\eps(y) \, V(y) \, \psi(y) \, dy
	&= -(\Delta\overline{\eta_\eps},\Delta\psi) 
	+ \lambda \, (\overline{\eta_\eps},\psi)
	\\
	&= -(\Delta\overline{\psi},\Delta\eta_\eps) 
	+ \lambda \, (\overline{\psi},\eta_\eps)
	\\
	&= -(\Delta\overline{\psi},\Delta\eta_\eps) 
	+ (\lambda+i\eps) \, (\overline{\psi},\eta_\eps)
	- i\eps \, (\overline{\psi},\eta_\eps)
	\\
	&= -(\overline{\psi},|V|^{1/2}\overline{\varphi}) 
	- i\eps \, (\overline{\psi},\eta_\eps)
	\\
	&= -(\varphi,|V|^{1/2}\psi) 
	- i\eps \, (\overline{\eta_\eps},\psi)
	\,.
	\end{aligned}
\end{equation}
Here the last but one equality follows from 
the weak formulation of the resolvent equation
$
(H_0-\lambda-i\eps)\eta_\eps = |V|^{1/2} \overline{\varphi}
$.
Consequently, \eqref{i1} and~\eqref{i2} 
imply~\eqref{BS} after taking the limit 
$\eps \to 0^\pm$, provided that 
$\eps \, (\bar\eta_\varepsilon,\psi) \to 0$
as $\eps \to 0$. To see the latter, we write
\begin{equation*}
	|(\overline{\eta_\eps},\psi)| 
	= |(\varphi,M_\eps\psi)|
	\leq \|\varphi\| \;\! \|M_\eps\| \;\! \|\psi\|
	\,,
\end{equation*}
where
$
M_\eps := \chi_\Omega \, |V|^{1/2} (H_0-\lambda-i\eps)^{-1}
$
with $\Omega := \supp\varphi$,
and it remains to show that $\eps \, \|M_\eps\|$ 
tends to zero as $\eps \to 0$.
Following~\cite[Thm.~III.6]{SiQF}, we use the 
resolvent kernels~\eqref{Green.BiHm} with~\eqref{Green.Lap} 
and the estimate 
$\|M_\eps\| \leq \|M_\eps\|_\mathrm{HS}$.

For $d=1$, we have
\begin{equation*}
\begin{aligned}
	\|M_\eps\|_\mathrm{HS}^2
	&= \frac{1}{4|k|^2} \iint_{\Omega \times \Real} 
	|V(x)| \, |G_k(x,y)-G_{-k}(x,y)|^2
	\, dx \, dy\\
	&\leq\frac{1}{2|k|^2}\iint_{\Omega \times \Real} 
	|V(x)| \, \bigg[\frac{e^{-2 \;\!\Re\sqrt{-k} \;\! |x-y|}}{4|k|}
	+
	\frac{e^{-2 \;\! \Re\sqrt{k} \;\! |x-y|}}{4|k|}\bigg]
	\, dx \, dy\\
	&=\frac{1}{8|k|^3}\bigg[\frac{1}{\Re\sqrt{-k}}+\frac{1}{\Re\sqrt{k}}\bigg]
	\int_\Omega |V(x)| \, dx
	\,,
\end{aligned}
\end{equation*}
where the last integral is bounded because 
$V \in L_\mathrm{loc}^1(\Real)$
as a consequence of~\eqref{rel.bddness}. 
Elementary calculations show that
$|k|^3\Re\sqrt{\pm k}$ can not decay faster 
than $|\eps|^{7/4}$ as $\eps\to0$. Hence, 
we have $\|M_\eps\|=\mathcal{O}(|\eps|^{-7/8})$ 
as $\eps\to0$, which concludes the proof of the 
lemma for $d=1$.

For $d=3$, we have analogous estimates
\begin{equation*}
\begin{aligned}
	\|M_\eps\|_\mathrm{HS}^2
	&\leq\frac{1}{32\pi^2|k|^2}\iint_{\Omega \times \Real^3} 
	|V(x)| \, \bigg[\frac{e^{-2 \;\!\Re\sqrt{-k} \;\! |x-y|}}{|x-y|^2}
	+
	\frac{e^{-2 \;\! \Re\sqrt{k} \;\! |x-y|}}{|x-y|^2}\bigg]
	\, dx \, dy\\
	&=\frac{1}{16\pi|k|^2}
	\bigg[\frac{1}{\Re\sqrt{-k}}+\frac{1}{\Re\sqrt{k}}
	\bigg]\int_\Omega |V(x)| \, dx
	\,.
\end{aligned}
\end{equation*}
Since $|k|^2\Re\sqrt{\pm k}$ can not decay 
faster than $|\eps|^{5/4}$ as $\eps\to0$, 
we have $\|M_\eps\|=\mathcal{O}(|\eps|^{-5/8})$ 
as $\eps\to0$. This concludes the proof of the 
lemma for $d=3$.

For $d=2$, we need to do rather involved 
estimates. Using the integral representation of 
the Macdonald's function 
(\cf~\cite[\S~8.432]{Gradshteyn-Ryzhik-2007}), 
we can write Green's function for the biharmonic operator as 
\begin{equation*}
	\displaystyle
	\widetilde G_z(x,y)=\frac{1}{4\pi k}\int_1^\infty 
		\frac{e^{-\sqrt{-k}\,|x-y|t} - e^{-\sqrt{k}\,|x-y|t}}{\sqrt{t^2-1}}\, dt.
\end{equation*}
Next, we observe that, for $t>1$,
\begin{equation*}
\begin{aligned}
	\int_{0}^{\infty}\bigl|e^{-\sqrt{-k}ts}
	+e^{-\sqrt{k}ts}\bigr|^2\, ds 
	\leq 2 \int_{0}^{\infty}\bigl(e^{-2\Re\sqrt{-k}ts}
	+e^{-2\Re\sqrt{k}ts}\bigr)\, ds
	=\frac{1}{t}\bigg[\frac{1}{\Re\sqrt{k}}+\frac{1}{\Re\sqrt{-k}}
	\bigg].
\end{aligned}
\end{equation*}
In view of this estimate, the Minkowski inequality yields 
\begin{equation*}
\begin{aligned}
	\|M_\eps\|_\mathrm{HS}^2 
	&\leq\frac{1}{8\pi^2|k|^2}\bigg(\int_{\Omega} |V(x)|\, dx\bigg)
	\bigg(\int_1^{\infty}\bigg(\int_0^{\infty}\bigl|e^{-\sqrt{-k}ts}
	+e^{-\sqrt{k}ts}\bigr|^2 ds\bigg)^{1/2}\frac{dt}{\sqrt{t^2-1}}\bigg)^2\\
	&=\frac{1}{8\pi^2|k|^2}
	\bigg[\frac{1}{\Re\sqrt{-k}}+\frac{1}{\Re\sqrt{k}}
	\bigg]\bigg(\int_\Omega |V(x)| \, dx\bigg) 
	\bigg(\int_1^{\infty}\frac{1}{\sqrt{t}\sqrt{t^2-1}}\,dt\bigg)^2
	\,.
\end{aligned}
\end{equation*}
We already know that $|k|^2\Re\sqrt{\pm k}$ can not decay 
faster than $|\eps|^{5/4}$ as $\eps\to0$. Hence, again we have $\|M_\eps\|=\mathcal{O}(|\eps|^{-5/8})$ 
as $\eps\to0$, which concludes the proof of the 
lemma for $d=2$.
\end{proof}

The preceding lemma can be viewed as a precise 
statement of one side of the Birman-Schwinger 
principle under the minimal regularity 
assumption~\eqref{rel.bddness} on the potential. 
It roughly says that if~$\lambda$ is an eigenvalue 
of~$H_V$, then $-1$~is an eigenvalue of an 
integral equation related to~$K_\lambda$. 
If $\lambda \not\in\sigma(H_0)$ the converse 
implication also holds, but it is not generally 
true if $\lambda \in\sigma(H_0)$ 
(\cf~\cite[Sec.~III.2]{SiQF}) 
and it is not needed for the purpose of this 
paper. In fact, we exclusively use the 
following corollary of Lemma~\ref{Lem.BS}.

\begin{Corollary}\label{Corol.BS}
Let $d \in \{1,2,3\}$ and assume~\eqref{rel.bddness}. 
Let $\lambda\in\Com$ be arbitrary. If either 
$\lambda \in \rho(H_0)$ and $\|K_{\lambda}\|<1$, 
or $\lambda \in \sigma(H_0)$ and 
$
\displaystyle
\liminf_{\eps\to 0^\pm} \|K_{\lambda+i\eps}\|<1
$, 
then $\lambda\notin\sigma_\mathrm{p}(H_V)$. 
\end{Corollary}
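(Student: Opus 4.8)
The plan is to argue by contradiction using Lemma~\ref{Lem.BS}. Suppose $\lambda\in\sigma_\mathrm{p}(H_V)$, so there is a nontrivial $\psi\in\Dom(H_V)$ with $H_V\psi=\lambda\psi$. Set $\phi:=|V|^{1/2}\psi$, which lies in $L^2(\Real^d)$ by~\eqref{rel.bddness}. The first key point is that $\phi\neq0$: if $\phi=0$, then $|V|\psi=0$ a.e., so the eigenvalue equation reduces to $H_0\psi=\lambda\psi$ in the weak sense, which is impossible for $\psi\neq0$ since $\sigma_\mathrm{p}(H_0)=\emptyset$ (the spectrum of $H_0$ is purely absolutely continuous). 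Hence $\phi$ is a genuine nonzero element of $L^2(\Real^d)$.

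Next I would upgrade the conclusion~\eqref{BS} of Lemma~\ref{Lem.BS}, which is stated only against test functions $\varphi\in L^2_0(\Real^d)$, to the full identity $K_{\lambda+i\eps}\phi\to-\phi$ weakly (or appropriately) in $L^2(\Real^d)$. In the case $\lambda\in\rho(H_0)$ this is immediate: $K_\lambda$ is bounded, $\varepsilon\mapsto K_{\lambda+i\eps}$ is continuous near $\varepsilon=0$, and $L^2_0(\Real^d)$ is dense in $L^2(\Real^d)$, so~\eqref{BS} forces $K_\lambda\phi=-\phi$ as an identity in $L^2$. Taking norms gives $\|\phi\|=\|K_\lambda\phi\|\leq\|K_\lambda\|\,\|\phi\|$, hence $\|K_\lambda\|\geq1$, contradicting the hypothesis $\|K_\lambda\|<1$. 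In the embedded case $\lambda\in\sigma(H_0)$, pick a sequence $\eps_n\to0^\pm$ along which $\|K_{\lambda+i\eps_n}\|\to\liminf_{\eps\to0^\pm}\|K_{\lambda+i\eps}\|=:L<1$; this sequence is then norm-bounded, say by $L'<1$ for large $n$. For each fixed $\varphi\in L^2_0(\Real^d)$ we have $|(\varphi,K_{\lambda+i\eps_n}\phi)|\leq L'\|\varphi\|\,\|\phi\|$, and letting $n\to\infty$ together with~\eqref{BS} gives $|(\varphi,\phi)|\leq L'\|\varphi\|\,\|\phi\|$. Since this holds for all $\varphi$ in a dense subset of $L^2(\Real^d)$, it extends to all $\varphi\in L^2(\Real^d)$, and choosing $\varphi=\phi$ yields $\|\phi\|^2\leq L'\|\phi\|^2$ with $L'<1$, forcing $\phi=0$ — the desired contradiction.

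The main obstacle, and the only delicate point, is the interchange of limits in the embedded case: I must pass to the limit $n\to\infty$ in $|(\varphi,K_{\lambda+i\eps_n}\phi)|\leq L'\|\varphi\|\,\|\phi\|$ while simultaneously using~\eqref{BS}, which only guarantees $(\varphi,K_{\lambda+i\eps}\phi)\to-(\varphi,\phi)$ for the \emph{two-sided} (or at least same-sided) limit. This is clean because for each fixed $\varphi$ the convergence in~\eqref{BS} is along the full one-sided net $\eps\to0^+$ (or $0^-$), hence in particular along the chosen subsequence $\eps_n$; no uniformity in $\varphi$ is needed since we handle one $\varphi$ at a time before invoking density. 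One should also note that the operator-norm bound $\|K_{\lambda+i\eps_n}\|\leq L'$ is exactly what lets us avoid any subsequence extraction on $\phi$ itself. With these observations the corollary follows in a few lines from Lemma~\ref{Lem.BS} and the fact that $H_0$ has no eigenvalues.
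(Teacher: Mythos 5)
Your proof is correct and takes essentially the same approach as the paper: contradiction via Lemma~\ref{Lem.BS}, first showing $\phi=|V|^{1/2}\psi\neq0$ because $H_0$ has no eigenvalues, then deducing $\|K_\lambda\|\geq1$ (resp.\ the $\liminf$ bound) from the Cauchy--Schwarz estimate on $(\varphi,K_{\lambda+i\eps}\phi)$. The only cosmetic difference is that the paper tests against the specific compactly supported cut-offs $\phi_n:=\xi_n\phi$ and sends $n\to\infty$ at the end, whereas you test against general $\varphi\in L^2_0(\Real^d)$ and invoke density of $L^2_0$ in $L^2$; these are interchangeable.
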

\begin{proof}
Let $\lambda \in \sigma_\mathrm{p}(H_V)$, 
let~$\psi$ be a corresponding eigenfunction and set 
$\phi: = |V|^{1/2}\psi$. If it were the case 
	that $\phi=0$, then the definition of $H_V$ 
	would yield 
	\begin{equation*}
	(\varphi, (H_0-\lambda)\psi)=(\varphi, (H_V-\lambda)\psi)
		-\int_{\Real^d}\overline{\varphi}\,V\,\psi=0,
	\end{equation*}
for all  $\varphi\in H^2(\Real^d)$, and consequently, 
$H_0\psi=\lambda\psi$. Unless $\psi=0$, this 
would mean that $\lambda$ is an eigenvalue of $H_0$, which 
is impossible. Hence, we conclude that $\phi\neq0$.

Next, if $\lambda\in\rho(H_0)$, 
then Lemma~\ref{Lem.BS} with $\eps=0$ implies 
\begin{equation}\label{CS.K.la}
	\|\phi\|^2 \, \|K_\lambda\| 
	\geq |(\phi,K_\lambda\phi)| = \|\phi\|^2
\end{equation}
and thus $\|K_\lambda\|\geq1$. 
	
If $\lambda \in \sigma(H_0)$, 
we set $\phi_n := \xi_n \phi$ for every 
positive~$n$, where~$\xi_n(x):=\xi(x/n)$ 
and $\xi \in C_0^\infty(\Real^d)$
is a usual cut-off function satisfying
$\xi(x)=1$ for $|x| \leq 1$ and $\xi(x)=0$ 
for $|x| \geq 2$. As in \eqref{CS.K.la}, 
we have
\begin{equation*}
	\|\phi_n\| \|\phi\| \, 
	\|K_{\lambda+i\eps}\|
	\geq |(\phi_n,K_{\lambda+i\eps}\phi)| 
	\,.
\end{equation*}
In view of $\phi_n\in L^2_0(\Real^d)$, we can invoke 
Lemma~\ref{Lem.BS} and take the limit $\eps \to 0^\pm$ 
to conclude
\begin{equation*}
	\|\phi_n\| \|\phi\| \, 
	\liminf_{\eps\to 0^\pm} \|K_{\lambda+i\eps}\| 
	\geq |(\phi_n,\phi)|
	\,.
\end{equation*}
The desired claim now follows by taking the 
limit $n\to\infty$.
\end{proof}
%
\section{Proofs}\label{Sec.proofs}
\noindent
First, we establish two elementary inequalities
which will be crucial in obtaining sharp 
eigenvalue bounds.

\begin{Lemma}\label{key.ineq.1}
For all non-negative real numbers $p$ and $q$, 
the following inequality holds	
\begin{equation}\label{key.elem.ineq.11}
	\frac{1}{2}
	\bigl(e^{q-p}+e^{p-q}\bigr)\leq e^{p+q}-\sin(p+q).
\end{equation}
\end{Lemma}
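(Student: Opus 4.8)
The plan is to reduce the two-variable inequality to a one-variable one. Put $s := p+q$ and observe that the left-hand side of~\eqref{key.elem.ineq.11} is exactly $\cosh(p-q) = \frac12\bigl(e^{p-q}+e^{q-p}\bigr)$. Since $p,q\ge 0$ we have $|p-q|\le p+q = s$, and $\cosh$ is even and non-decreasing on $[0,\infty)$, so
\[
	\tfrac12\bigl(e^{q-p}+e^{p-q}\bigr) = \cosh(p-q) \le \cosh s \,.
\]
Hence it suffices to prove the single-variable statement $\cosh s \le e^{s}-\sin s$ for all $s\ge 0$. Note that after this step the quantity $p-q$ has disappeared entirely, so the (possibly negative) sign of $\sin(p+q)$ will cause no difficulty.

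Next I would rewrite $\cosh s = \tfrac12(e^{s}+e^{-s})$ and notice that the inequality $\cosh s \le e^{s}-\sin s$ is equivalent to
\[
	\sin s \le e^{s} - \cosh s = \sinh s \,, \qquad s\ge 0 \,.
\]
So the whole claim comes down to the elementary fact that $\sin s \le \sinh s$ on $[0,\infty)$. This can be obtained, e.g., by chaining the standard estimates $\sin s \le s$ and $\sinh s \ge s$ for $s\ge 0$ (each of which follows from differentiating and using $1-\cos s\ge 0$, resp.\ $\cosh s - 1\ge 0$, together with equality at $s=0$); alternatively one may compare Taylor series directly, $\sinh s - \sin s = 2\sum_{k\ \mathrm{odd}} \frac{s^{2k+1}}{(2k+1)!} \ge 0$ for $s\ge 0$.

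There is no real obstacle in this argument; the only point that requires a moment's care is the first reduction, where the hypothesis $p,q\ge 0$ is used precisely to guarantee $|p-q|\le p+q$, after which monotonicity of $\cosh$ removes all dependence on $p-q$. Putting the two steps together yields
\[
	\tfrac12\bigl(e^{q-p}+e^{p-q}\bigr)\le \cosh(p+q) \le e^{p+q}-\sin(p+q) \,,
\]
which is~\eqref{key.elem.ineq.11}.
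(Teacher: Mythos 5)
Your proof is correct, and it takes a genuinely different and cleaner route than the paper's. You reduce the two-variable inequality to a single variable $s=p+q$ by writing the left-hand side as $\cosh(p-q)$ and invoking $|p-q|\le p+q$ together with the monotonicity of $\cosh$ on $[0,\infty)$; after that, the target $\cosh s \le e^{s}-\sin s$ collapses to the elementary fact $\sin s \le \sinh s$ on $[0,\infty)$. The paper instead divides through by $e^{p+q}$ to get
\[
e^{-2p}+e^{-2q}+2e^{-(p+q)}\sin(p+q)\le 2,
\]
fixes $p=p_0$, and carries out a one-variable calculus argument: it locates the critical points of $\Phi(q)=e^{-2p_0}+e^{-2q}+2e^{-(p_0+q)}\sin(p_0+q)$ via the relation $e^{q_0-p_0}=\cos(p_0+q_0)-\sin(p_0+q_0)$, evaluates $\Phi$ there and at the endpoints, and bounds each by $2$. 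Your argument avoids all of this critical-point analysis, and the only hypothesis used is exactly where it should be ($p,q\ge 0$ gives $|p-q|\le p+q$). One caveat worth noting if you want a uniform method: your reduction $\cosh(p-q)\le\cosh(p+q)$ is lossy at $p=q$ in a way that does not hurt Lemma~\ref{key.ineq.1} or inequality~\eqref{key.elem.ineq.12}, but it would not carry over verbatim to Lemma~\ref{key.ineq.2}, where the bound $p^2+q^2\ge\tfrac12(p+q)^2$ is not strong enough near $p=q$ small; the paper's more hands-on $\Phi$-analysis is what handles all three inequalities in a parallel fashion.
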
	
\begin{proof}
Let us rewrite \eqref{key.elem.ineq.11} as 
\begin{equation}\label{key.ineq.for1D}
	e^{-2p}+e^{-2q}+2e^{-(p+q)}\sin(p+q)
	\leq2\,.
\end{equation}
Since \eqref{key.ineq.for1D} is symmetric 
with respect to $p$ and $q$, there is no loss 
of generality in assuming that 
$0\leq p\leq q$. Let us fix $p=p_0\geq0$ and 
analyse the smooth function 
\begin{equation*}
	\Phi(q):=e^{-2p_0}+e^{-2q}
		+2e^{-(p_0+q)}\sin(p_0+q)
\end{equation*}	
on the interval $[p_0,\infty)$. It is easy 
to check that every possible critical 
point $q_0$ of $\Phi$ must satisfy the 
equation
\begin{equation*}
	e^{q_0-p_0}=\cos(p_0+q_0)-\sin(p_0+q_0).
\end{equation*}
In view of this observation, some elementary
calculations yield that
\begin{equation}\label{cr.val}
	\Phi(q_0)
	=2e^{-2p_0}\bigl(1-\sin^2(p_0+q_0)\bigr)
	\leq 2.
\end{equation}
On the other hand, we have 
\begin{equation}\label{end.pt.val}
	\Phi(p_0)=2e^{-2p_0}\bigl(1+\sin(2p_0)\bigr)
	\leq2, \qquad 
	\lim_{q\uparrow+\infty}\Phi(q)=e^{-2p_0}
	\leq 1, 
\end{equation}
the first estimate being the consequence 
of the elementary inequality 
$\sin(t)\leq t \leq e^{t}-1$ for $t\geq0$. 
Hence, we conclude from \eqref{cr.val} and 
\eqref{end.pt.val} that
\begin{equation*}
	\max_{q\geq p_0}\Phi(q)\leq 2,
\end{equation*}
which proves the claim 
in~\eqref{key.ineq.for1D}. 
\end{proof}
\begin{Lemma}\label{key.ineq.2}
For all non-negative real numbers $p$ and $q$, 
the following inequality holds	
\begin{equation}\label{key.elem.ineq.2}
	\frac{1}{2}
	\bigl(e^{q-p}+e^{p-q}\bigr)
	\leq (p^2+q^2)e^{p+q}+\cos(p+q).
\end{equation}
\end{Lemma}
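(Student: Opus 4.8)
The plan is to reduce Lemma~\ref{key.ineq.2} to two matching one-term estimates. Since $\frac{1}{2}(e^{q-p}+e^{p-q})=\cosh(p-q)$, the asserted inequality~\eqref{key.elem.ineq.2} is equivalent to
\[
  \cosh(p-q)-\cos(p+q)\le(p^2+q^2)\,e^{p+q}.
\]
First I would split the left-hand side as $\bigl[\cosh(p-q)-1\bigr]+\bigl[1-\cos(p+q)\bigr]$ and, using the algebraic identity $(p-q)^2+(p+q)^2=2(p^2+q^2)$, reduce the lemma to proving the two bounds
\[
  \cosh(p-q)-1\le\tfrac{1}{2}(p-q)^2\,e^{p+q}
  \qquad\text{and}\qquad
  1-\cos(p+q)\le\tfrac{1}{2}(p+q)^2\,e^{p+q},
\]
whose sum is exactly the required inequality.

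The trigonometric bound is immediate: from $1-\cos t=2\sin^2(t/2)$ and $|\sin s|\le|s|$ one gets $1-\cos(p+q)\le\tfrac{1}{2}(p+q)^2$, and $e^{p+q}\ge1$ (as $p,q\ge0$) finishes it. For the hyperbolic bound I would invoke the half-angle identities $\cosh t-1=2\sinh^2(t/2)$ and $\cosh^2(t/2)=\tfrac{1}{2}(\cosh t+1)$ together with the elementary inequality $|\sinh s|\le|s|\cosh s$ (equivalently $\tanh s\le s$ for $s\ge0$), which yield
\[
  \cosh(p-q)-1=2\sinh^2\tfrac{p-q}{2}\le\tfrac{1}{2}(p-q)^2\cosh^2\tfrac{p-q}{2}
  =\tfrac{1}{4}(p-q)^2\bigl(\cosh(p-q)+1\bigr).
\]
It then remains to note that, since $p,q\ge0$, one has $\cosh(p-q)=\tfrac{1}{2}(e^{p-q}+e^{q-p})\le e^{p+q}$, hence $\cosh(p-q)+1\le e^{p+q}+1\le2e^{p+q}$; substituting this above completes the second bound.

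I expect the only step requiring genuine care to be this extraction of the factor $(p-q)^2$ from $\cosh(p-q)-1$, that is, the use of $|\sinh s|\le|s|\cosh s$ in place of a cruder estimate. Indeed, the tempting shortcut of bounding $\cosh(p-q)\le\cosh(p+q)$ and reducing to a one-variable inequality does \emph{not} work here, because $p^2+q^2\le(p+q)^2$ for $p,q\ge0$, so such a reduction would only deliver the weaker right-hand side $(p+q)^2e^{p+q}$; retaining the two squares $(p-q)^2$ and $(p+q)^2$ separately throughout the argument is precisely what keeps it sharp enough. All the remaining ingredients are routine applications of standard elementary inequalities.
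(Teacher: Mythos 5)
Your proof is correct, and it takes a genuinely different route from the paper's. The paper rewrites \eqref{key.elem.ineq.2} in the equivalent form
\begin{equation*}
  e^{-2p}+e^{-2q}-2e^{-(p+q)}\cos(p+q)\le 2(p^2+q^2),
\end{equation*}
assumes WLOG $p\le q$, fixes $p=p_0$, and shows via a derivative computation (combined with $e^t\ge\cos t+\sin t$ and $e^s\ge 1+s$) that the difference of the two sides is non-increasing in $q$ on $[p_0,\infty)$ and non-positive at $q=p_0$; it is a one-variable calculus argument. Your argument is purely algebraic: you peel the left-hand side into $[\cosh(p-q)-1]+[1-\cos(p+q)]$, exploit the exact identity $(p-q)^2+(p+q)^2=2(p^2+q^2)$ to split the target, handle the trigonometric piece via $1-\cos t\le t^2/2$, and handle the hyperbolic piece via $\tanh s\le s$ together with the half-angle identity $\cosh^2(t/2)=\tfrac12(1+\cosh t)$ and the bound $\cosh(p-q)\le e^{p+q}$. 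Both proofs are complete; yours avoids differentiation and critical-point reasoning entirely, and the symmetric split $(p-q)^2+(p+q)^2$ makes visible exactly why the factor $p^2+q^2$ appears. Your closing warning is also apt: replacing $(p-q)^2$ by $(p+q)^2$ early on would destroy the needed sharpness, so keeping the two squares separate is the crux of the approach.
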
	
\begin{proof}
Let us rewrite \eqref{key.elem.ineq.2} as 
\begin{equation}\label{key.ineq.for3D}
	e^{-2p}+e^{-2q}-2e^{-(p+q)}\cos(p+q)
	\leq 2(p^2+q^2)\,.
\end{equation}
Since \eqref{key.ineq.for3D} is symmetric 
with respect to $p$ and $q$, there is no loss 
of generality in assuming that 
$0\leq p\leq q$. Let us fix $p=p_0\geq0$ and 
analyse the smooth function 
\begin{equation*}
	\Phi(q):=e^{-2p_0}+e^{-2q}
	-2e^{-(p_0+q)}\cos(p_0+q)-2(p^2_0+q^2)
\end{equation*}	
on the interval $[p_0,\infty)$. We have
\begin{equation}\label{deriv.Phi}
	\Phi'(q)=-2e^{-2q}+2e^{-(p_0+q)}\cos(p_0+q)
	+2e^{-(p_0+q)}\sin(p_0+q)-4q.
\end{equation}
On the other hand, for all $s, t\in\Real$, we 
have the well-known inequalities
$e^t\geq \cos(t)+\sin(t)$
and $e^s\geq1+s$. Applying these inequalities 
with $t=p_0+q$ and $s=-2q$, we obtain 
\begin{equation}\label{deriv.Phi.est}
	\Phi'(q)\leq -2e^{-2q}+2-4q\leq0.
\end{equation}
Therefore, $\Phi$ is non-increasing on $[0,\infty)$
and thus we have 
\begin{equation*}
	\Phi(q)\leq \Phi(p_0)=2e^{-2p_0}
	+2e^{-2p_0}\cos(2p_0)-4p^2_0\leq0, 
\end{equation*}
for all $q\geq p_0$, where for the last step one 
needs to recall the inequality 
\begin{equation*}
	1-\cos(t)\leq\frac{1}{2}t^2\leq\frac{1}{2}t^2e^t, 
	\quad t\geq0.
\end{equation*}
Hence, for each fixed $p\geq0$, we have $\Phi(q)\leq0$ 
for all $q\geq p$. This proves the desired claim.
\end{proof}
\begin{Remark}\label{Rem.analog.elem.ineq}
We note that the expressions on the 
left-hand-sides	of \eqref{key.ineq.for1D} and 
\eqref{key.ineq.for3D} are non-negative 
for all $p,q\geq0$. In the same way as in 
Lemma~\ref{key.ineq.1} one can show that
\begin{equation}\label{key.elem.ineq.12}
	\frac{1}{2}
	\bigl(e^{q-p}+e^{p-q}\bigr)\leq e^{p+q}+\cos(p+q)
\end{equation}
holds for all $p,q\geq0$.
\end{Remark}

Now we are in a position to establish our theorems.

\subsection{Proof of Theorem~\ref{Thm.EV.bound.unif}}
We start with the case  
$\lambda\in\Com\setminus[0,\infty)$ 
and denote by $k$ the principal square root 
of~$\lambda$ as before. Throughout the proof
we assume the parameter $k$ to be fixed.

First, for each $d\in\{1,2,3\}$, we justify the existence 
of a universal constant $c_d>0$ such that the 
corresponding Green's function of the biharmonic 
operators obeys the following pointwise estimate
\begin{equation}\label{pointwise.estim.Green}
		|\widetilde G_\lambda(x,y)|\leq \frac{c_d}{|k|^{2-d/2}}.	
\end{equation} 

For the case $d=1$, elementary calculations show that 
the inequality
\begin{equation*}
	\big|\sqrt{k} \, e^{-\sqrt{-k}\,|x-y|} - 
	\sqrt{-k} \, e^{-\sqrt{k}\,|x-y|}\big|
	\leq\sqrt{2}\sqrt{|k|}
\end{equation*}
is equivalent to \eqref{key.elem.ineq.11} with 
$p=\Re\sqrt{k} \, |x-y|\geq0$
and $q=\Im\sqrt{k} \, |x-y|\geq 0$ if 
$\arg(\lambda)\in(0,\pi]$
or with 
$p=\Re\sqrt{-k} \, |x-y|\geq0$
and $q=\Im\sqrt{-k} \, |x-y|\geq 0$ if
$\arg(\lambda)\in(-\pi, 0)$.
Hence, we have \eqref{pointwise.estim.Green} with the 
constant $c_1:=\frac{1}{2\sqrt{2}}$, 
\ie
\begin{equation*}
	|\widetilde G_\lambda(x,y)|
	\leq \frac{1}{2\sqrt{2}|k|^{3/2}}.	
\end{equation*}

For the case $d=3$, elementary calculations show
that the inequality
\begin{equation*}
\big|e^{-\sqrt{-k}\,|x-y|} - 
e^{-\sqrt{k}\,|x-y|}\big|
\leq\sqrt{2}\sqrt{|k|}|x-y|
\end{equation*}
is equivalent to \eqref{key.elem.ineq.2} with 
$p=\Re\sqrt{k} \, |x-y|\geq0$
and $q=\Im\sqrt{k} \, |x-y|\geq 0$ if 
$\arg(\lambda)\in(0,\pi]$
or with 
$p=\Re\sqrt{-k} \, |x-y|\geq0$ 
and $q=\Im\sqrt{-k} \, |x-y|\geq 0$ if 
$\arg(\lambda)\in(-\pi, 0)$.
That is why we have \eqref{pointwise.estim.Green} with 
the constant $c_3:=\frac{1}{4\sqrt{2}\pi}$, 
\ie
\begin{equation*}
\begin{aligned}
|\widetilde G_\lambda(x,y)| 
\leq \frac{1}{4\sqrt{2}\pi\sqrt{|k|}}\,.
\end{aligned}
\end{equation*}

For the case $d=2$, on the account of the 
asymptotic expansion of the Macdonald's 
function~$K_0(\zeta)$ for small $\zeta$, 
we can write 
(\cf~\cite[\S~8.447]{Gradshteyn-Ryzhik-2007})
\begin{equation*}
	G_k(x,y)=-\frac{1}{2\pi} \ln\sqrt{-k}-\frac{1}{2\pi}
	\bigg(\gamma+\ln\frac{|x-y|}{2}\bigg)+\text{o}(|x-y|), 
	\qquad |x-y|\to 0^{+},
\end{equation*}
where $\gamma$ is the Euler constant.
Hence, it follows that
\begin{equation*}
\begin{aligned}
	G_k(x,y)-G_{-k}(x,y)&=
		\frac{1}{2\pi}\bigl(\ln\sqrt{k}-\ln\sqrt{-k}\bigr)
		+\text{o}(|x-y|)\\
		&= -\frac{i}{4}+o(|x-y|), 
			\qquad |x-y| \to 0^{+},
\end{aligned}
\end{equation*}
and consequently, recalling \eqref{Green.BiHm}, we get
\begin{equation*}
	\widetilde G_\lambda(x,y)= 
	\frac{i}{8\pi k}\textrm{Arg} (k) +o(|x-y|), 
		\qquad |x-y|\to 0^{+}.
\end{equation*}
On the other hand, in view of the well-known asymptotic expansion 
of the Macdonald's function~$K_0(\zeta)$ for large $\zeta$, we have

\begin{equation*}
	G_k(x,y)=\sqrt{\frac{\pi}{2\sqrt{-k}\,|x-y|}} \, e^{-\sqrt{-k}|x-y|}
		\bigl(1+\mathcal{O}(1)\bigr)
	=\mathcal{O}\Bigl(\frac{1}{\sqrt[4]{|x-y|}}\Bigr),	\qquad |x-y| \to +\infty,
\end{equation*}
and consequently,
\begin{equation*}
	\widetilde G_\lambda(x,y)= \mathcal{O}(1), 
	\qquad |x-y| \to +\infty.
\end{equation*}
Since these asymptotic expansions are uniform in $k$ and 
the Macdonald's function is entire on the right
half-plane, we conclude the existence of a universal
constant $c_2>0$ satisfying \eqref{pointwise.estim.Green}.

In view of \eqref{pointwise.estim.Green}, now we can 
estimate the norm of the Birman-Schwinger 
operator as follows 
\begin{equation}\label{first}
	\|K_\lambda\|^2 
	\leq \|K_\lambda\|^2_\mathrm{HS}
	=\iint_{\Real^d\times\Real^d}|V(x)|\,
		|\widetilde G_\lambda(x,y)|^2
		\,|V(y)|\,dx\,dy 
	\leq \frac{c_d^2}{|k|^{4-d}}\|V\|^2_{L^1(\Real^d)}
\end{equation}
and thus
\begin{equation*}
	\displaystyle
	\|K_\lambda\|
	\leq \frac{c_d\|V\|_{L^1(\Real^d)}}{|\lambda|^{1-d/4}}. 
\end{equation*}
If $\lambda\in(0,\infty)$, then the same analysis 
applied for $\lambda+i\eps$ with $\eps>0$ (instead 
of $\lambda$) yields
\begin{equation*}
	\liminf_{\eps\to 0^\pm} \|K_{\lambda+i\eps}\|
	\leq \liminf_{\eps\to 0^\pm} 
	\frac{c_d\|V\|_{L^1(\Real^d)}}
	{|\lambda+i\eps|^{1-d/4}}=
	\frac{c_d\|V\|_{L^1(\Real^d)}}
	{|\lambda|^{1-d/4}}. 
\end{equation*}
Hence, Corollary~\ref{Corol.BS} implies that 
$\lambda\in\Com\setminus\{0\}$
cannot be an eigenvalue for~$H_V$ unless it holds 
that
\begin{equation*}
	c_d\|V\|_{L^1(\Real^d)}
	\geq|\lambda|^{1-d/4}.
\end{equation*}
This completes the proof since the origin~$\lambda=0$ 
obviously belongs to the right-hand side
of~\eqref{opt.bound.unif}.
\qed

\subsection{Proof of Theorem~\ref{Thm.EV.bound.3D.Rollnik}}
First, assume that 
$\lambda\in\Com\setminus[0,\infty)$ 
and let $k$ be the principal square root of~$\lambda$. 
Elementary calculations show that the estimate
\begin{equation*}
	\big|e^{-\sqrt{-k}\,|x-y|} - 
	e^{-\sqrt{k}\,|x-y|}\big|
	\leq \sqrt{2}
\end{equation*}
is equivalent to \eqref{key.elem.ineq.12} with 
	$p=\Re\sqrt{k} \, |x-y|\geq0$, 
	$q=\Im\sqrt{k} \, |x-y|\geq 0$ if 
	$\arg(\lambda)\in(0,\pi]$, and with 
	$p=\Re\sqrt{-k} \, |x-y|\geq0$, 
	$q=\Im\sqrt{-k} \, |x-y|\geq 0$ if 
	$\arg(\lambda)\in(-\pi, 0)$.
Hence, the Green's 
function can be estimated as 
\begin{equation*}
|\widetilde G_\lambda(x,y)|
\leq \frac{1}{4\sqrt{2}\pi|k||x-y|}	
\end{equation*}
for all distinct $x,y\in\Real^3$. Using this, we can 
estimate the norm of the Birman-Schwinger operator in 
terms of the Rollnik norm of $V$ as follows
\begin{equation*}\label{Est.BS.op.3D}
\begin{aligned}
	\|K_\lambda\|^2 
	\leq \|K_\lambda\|^2_\mathrm{HS}
	=\iint_{\Real^3\times\Real^3}|V(x)|\,
	|\widetilde G_\lambda(x,y)|^2
	\,|V(y)|\,dx\,dy 
	\leq \frac{1}{32\pi^2|k|^2}\|V\|^2_{R}.
\end{aligned}
\end{equation*}
Hence, we have
\begin{equation*}
	\displaystyle
	\|K_\lambda\|\leq 
	\frac{1}{4\pi\sqrt{|\lambda|}}
	\frac{\|V\|_{R}}{\sqrt{2}}.
\end{equation*}
If $\lambda\in(0,\infty)$, then the same analysis 
applied for $\lambda+i\eps$ with $\eps>0$ (instead 
of $\lambda$) yields that
\begin{equation*}
	\liminf_{\eps\to 0^\pm} \|K_{\lambda+i\eps}\|
	\leq \liminf_{\eps\to 0^\pm} 
	\frac{1}{4\pi\sqrt{|\lambda+i\eps|}}
	\frac{\|V\|_{R}}{\sqrt{2}}
	=	\frac{1}{4\pi\sqrt{|\lambda|}}
	\frac{\|V\|_{R}}{\sqrt{2}}\,.
\end{equation*}
Hence, Corollary~\ref{Corol.BS} implies that
$\lambda\in\Com\setminus\{0\}$
cannot be an eigenvalue 
for~$H_V$ if
\begin{equation*}
	\frac{1}{4\pi}\frac{\|V\|_{R}}{\sqrt{2}}
	<\sqrt{|\lambda|}.
\end{equation*}
This completes the proof since $\lambda=0$ 
obviously belongs to the right-hand-side
of \eqref{inclusion.Rollnik}.\qed

\subsection{Proof of Theorem~\ref{Thm.alt.EV.bound.3D}}
First, assume that $\lambda \in \Com \setminus [0,\infty)$ 
and let $k$ be the principal square root of~$\lambda$. 
By the triangle inequality, we have
\begin{equation*} 
	\|K_\lambda\|
	\leq 
	\frac{1}{2|k|} 
	\left[ 
	\||V|^{1/2}(-\Delta-k)^{-1} V_{1/2}\| 
	+ 
	\||V|^{1/2}(-\Delta+k)^{-1} V_{1/2}\| 
	\right]\,.
\end{equation*}
where the integral kernel reads now (\cf~\eqref{Green.Lap})
\begin{equation*}
	(-\Delta-k)^{-1}(x,y) 
	= \frac{e^{-\sqrt{-k}\,|x-y|}}{4\pi \, |x-y|}  
	\,.
\end{equation*}
Using the pointwise estimate
\begin{equation*}
	|(-\Delta-k)^{-1}(x,y)|\leq(-\Delta)^{-1}(x,y)
\end{equation*}
valid for all $x,y \in \Real$ with $ x\not=y$  
and $k\in\Com$, 
we estimate as in~\cite[proof of Lem.~1]{FKV}
\begin{equation*} 
\begin{aligned}
	\| |V|^{1/2}(-\Delta \pm k)^{-1} V_{1/2} \|
	\leq \| |V|^{1/2}(-\Delta)^{-1} |V|^{1/2} \|
	\leq \| |V|^{1/2}(-\Delta)^{-1/2}\| 
	\|(-\Delta)^{-1/2} |V|^{1/2} \| \,.
\end{aligned}
\end{equation*}
Finally, noticing that 
\begin{equation*} 
	\|(-\Delta)^{-1/2} |V|^{1/2} \|
	= \| |V|^{1/2}(-\Delta)^{-1/2}\|
	= \left(
	\sup_{\stackrel[\psi\not=0]{}{\psi \in H^1(\Real^3)}}	 
	\frac{\displaystyle \int_{\Real^3} |V||\psi|^2}
	{\displaystyle \int_{\Real^3} |\nabla\psi|^2} 
	\right)^{1/2}\,,
\end{equation*}
we arrive at the estimate
\begin{equation}\label{norm.est.max}
	\|K_\lambda\|\leq\frac{1}{\sqrt{|\lambda|}}
	\left(
	\sup_{\stackrel[\psi\not=0]{}{\psi \in H^1(\Real^3)}}	 
	\frac{\displaystyle \int_{\Real^3} |V||\psi|^2}
	{\displaystyle \int_{\Real^3} |\nabla\psi|^2} 
	\right)
	\,.
\end{equation}
We notice as before that the origin $\lambda=0$ 
trivially satisfies the estimate in \eqref{bound.3D}.
If $\lambda\in(0,\infty)$, then by repeating the 
analysis for $\lambda+i\eps$ with $\eps>0$ 
(instead of $\lambda$) we see that the right-hand side 
of \eqref{norm.est.max} dominates 
$\liminf_{\eps\to 0^\pm} \|K_{\lambda+i\eps}\|$. 
The claimed inclusion thus immediately follows  
from Corollary~\ref{Corol.BS}.
\qed
%

\section{Optimality of the bounds}\label{Sec.end}
%
In this section we discuss two concrete 
examples which demonstrate the optimality 
of the constants corresponding to the 
eigenvalue bounds for dimensions~$d=1$ and~$d=3$ 
in Theorem~\ref{Thm.EV.bound.unif}. 
Both examples are given in terms of distributional Dirac delta potentials,
but an approximation by regular potentials is also mentioned.
We do not consider the delta potential in two dimensions
because the constant $C_2$ of Theorem~\ref{Thm.EV.bound.unif} 
is not explicit (nonetheless, preliminary computations confirm
that the weak-coupling constant $c_2=\frac{1}{64}$ is achieved by
the Dirac delta potential also for $d=2$).

\subsection{The delta potential in one dimension}
As described in Section~\ref{Sec.pre},
the operator $H_{\alpha\delta} = H_0 \dot{+} \alpha\delta$ in $\sii(\Real)$
with $\alpha \in \Com$ should be understood
as the operator associated with the quadratic form
\begin{equation*}
  h_{\alpha\delta}[\psi] := 
  \int_{\Real} |\psi''|^2 
  +\alpha \, |\psi(0)|^2
  \,, \qquad
  \Dom(h_{\alpha\delta}):=H^{2}(\Real)
  \,.
\end{equation*}
One has 
\begin{equation}\label{Dirac}
\begin{aligned}
  H_{\alpha\delta}\psi(x) &= \psi''''(x)
  \,, \qquad 
  x \in \Real\setminus\{0\}
  \,,
  \\ 
  \Dom(H_{\alpha\delta}) &= 
  \big\{
  \psi \in H^{4}(\Real\setminus\{0\}) 
  \cap H^{3}(\Real)
  \, : \
  \psi'''(0^+) - \psi'''(0^-) = -\alpha \, \psi(0) \big\} 
  \,.
\end{aligned}
\end{equation}
Note that the function values at~$0$ are well defined
due to the Sobolev embedding $H^4(\Real) \hookrightarrow C^3(\Real)$.
Since the perturbation is a point interaction, it follows that 
$\sigma_\mathrm{ess}(H_{\alpha\delta}) = [0,+\infty)$.

Let us look for eigenvalues of~$H_{\alpha\delta}$,
\ie~we consider $H_{\alpha\delta}\psi = k^4 \psi$
with the convention 
\begin{equation}\label{convention}
  \Re k > 0 \qquad \mbox{and} \qquad \Im k > 0
  \,.
\end{equation}
The general solution of the differential equation 
$\psi'''' = k^4\psi$ in any open real interval reads
\begin{equation}\label{ansatz}
  \psi(x) = C_1 e^{k x} + C_2 e^{-k x}
  + C_3 e^{i k x} + C_4 e^{- i k x}
  \,,
\end{equation}
where $C_1,C_2,C_3,C_4$ are arbitrary complex numbers.
Considering these general solution in the separate intervals
$(-\infty,0)$ and $(0,+\infty)$, and imposing the integrability requirements 
($C_1,C_4=0$ if $x>0$ and $C_2,C_3=0$ if $x<0$)
together with the interface conditions at zero due to~\eqref{Dirac}
(continuity up to the second derivative and the jump condition
for the third derivative), we arrive at the condition 
\begin{equation*}\label{condition1}
  k^3 = \frac{1}{2^{3/2}} \, \alpha \, e^{-i\pi/4}
  \,.
\end{equation*}
In order to make this equation compatible with~\eqref{convention},
we see that the point spectrum of~$H_{\alpha\delta}$ is not empty
if, and only if, $\arg\alpha \in (\pi/4,7\pi/4)$,
or equivalently, $\Re\alpha < |\Im\alpha|$.
If this condition is satisfied, $H_{\alpha\delta}$~possesses
one eigenvalue corresponding to the right-hand side
put to the power~$4/3$. 
Summing up,
\begin{equation*} 
  \sigma_\mathrm{p}(H_{\alpha\delta}) = 
  \begin{cases}
  \displaystyle
  \left\{
  \frac{1}{4} \, \alpha^{4/3} \, e^{-i\pi/3}
  \right\}
  & \mbox{if} \quad \Re\alpha < |\Im\alpha| \,,
  \\
  \varnothing
  & \mbox{if} \quad \Re\alpha \geq |\Im\alpha| \,.
  \end{cases}
\end{equation*}
Notice that the arguments of~$\alpha$ for which the point spectrum is empty
correspond to real points intersecting the half-axis $[0,+\infty)$.
So any eigenvalue of~$H_{\alpha\delta}$, if it exists,
is necessarily discrete.

In particular, choosing $\alpha := e^{i\theta}$ 
and varying $\theta \in (\pi/4,7\pi/4)$,
any boundary point of the closed disk 
$\{\lambda \in \Com : |\lambda|\leq \frac{1}{4}\}$
except for the point $\lambda = \frac{1}{4}$
will be an eigenvalue of~$H_{\alpha\delta}$.
This proves the optimality of 
Theorem~\ref{Thm.EV.bound.unif} for $d=1$ because 
$\|\alpha\delta\|_{L^1(\Real)} = |\alpha| = 1$.

\begin{Remark}
One can also justify the optimality of the constant 
in the eigenvalue bound corresponding to the 
inclusion \eqref{opt.bound.unif} for $d=1$ in the 
limit $\eps \to 0^+$ by considering 
the following family of regular but 
singularly scaled potentials
\begin{equation*}
	\delta_\eps(x) :=
	\begin{cases}
	\displaystyle
	\frac{1}{\eps} 
	& \displaystyle \mbox{if} 
	\quad |x| < \eps/2 \,,
	\\
	0 &\mbox{otherwise} \,. 
	\end{cases} 
\end{equation*}
Of course $\delta_\eps \to \delta$ 
in the sense of distributions as $\eps \to 0^+$.
What is more, the point spectrum of $H_{\alpha\delta_\eps}$
converges to the point spectrum of $H_{\alpha\delta}$ as $\eps \to 0^+$.
This can be checked by explicitly solving the differential equations 
of the eigenvalue problem
in the separate intervals where~$\delta_\eps$ is constant
and matching the solutions at the boundary points. 
\end{Remark}

\subsection{The delta potential in three dimensions}
In analogy with the one-dimensional situation above,
we consider the operator 
$H_{4\pi\alpha\delta} = H_0 \dot{+} 4\pi\alpha\delta$ in $\sii(\Real^3)$
with $\alpha \in \Com$ associated with the quadratic form
\begin{equation*}
	h_{4\pi\alpha\delta}[\psi] := 
	\int_{\Real^3} |\Delta\psi|^2 
	+ 4\pi \alpha\, |\psi(0)|^2
	\,, \qquad
	\Dom(h_{4\pi\alpha\delta}):=H^2(\Real^3)
	\,.
\end{equation*}
Again, $\sigma_\mathrm{ess}(H_{4\pi\alpha\delta}) = [0,+\infty)$.

Let us look for eigenvalues of $H_{4\pi\alpha\delta}$ 
corresponding to radially symmetric eigenfunctions. 
That is, we consider the equation $H_{4\pi\alpha\delta}\psi = k^4 \psi$
with the same convention~\eqref{convention} as above
and look for special solutions of the form $\psi(x) = g(|x|)$ 
with $g \in H^2((0,\infty),r^2 \, dr)$.
These eigenvalues are determined by eigenvalues 
of the one dimensional operator $T_\alpha$ in $L^2((0,\infty),r^2 \, dr)$
associated with the quadratic form 
\begin{equation*}
	t_\alpha[g] := 
	\int_0^\infty |r^{-1}[r g'(r)]'|^2 \, r^2 \, dr
	+ \alpha \, |g(0)|^2
	\,, \qquad
	\Dom(t_\alpha):=H^2((0,\infty),r^2\,dr)  
        \,.
\end{equation*}
Using the unitary transform 
$U : L^2((0,\infty),r^2 \, dr) \to L^2((0,\infty),dr)$
which acts as $(Ug)(r) := r g(r)$,
the operator~$T_\alpha$ is unitarily equivalent
to the operator $\widetilde{T}_\alpha := U T_\alpha U^{-1}$ 
in $L^2((0,\infty))$ associated with the quadratic form
\begin{equation*}
	\widetilde{t}_\alpha[f] := 
	\int_0^\infty |f''(r)|^2 \, dr
	+ \alpha \, |f'(0)|^2
	\,, \qquad
	\Dom(\widetilde{t}_\alpha) := H^2((0,\infty)) \cap H_0^1((0,\infty)) 
        \,.
\end{equation*}
One has
\begin{equation*}\label{Dirac.prime}
\begin{aligned}
	\widetilde{T}_\alpha f(r) &= f''''(r)
	\,, \qquad 
	x \in (0,\infty)
	\,,
	\\ 
	\Dom(\widetilde{T}_\alpha) &= 
	\big\{
	\eta \in H^4((0,\infty)) 
	\cap H^1_0((0,\infty))
	\ : \
	f''(0) - \alpha f'(0) = 0
\big\} \,.
\end{aligned}
\end{equation*}
The boundary values at~$0$ are well defined due to
the Sobolev embedding $H^4((0,\infty)) \hookrightarrow C^3([0,\infty))$.
Considering the general solution~\eqref{ansatz} 
of the differential equation $f''''=k^4 f$  
and imposing the integrability requirement (\ie\ $C_1,C_4=0$)
together with the boundary conditions at~$0$,
we arrive at the condition 
\begin{equation*}\label{condition3}
  k = \frac{1}{2^{1/2}} \, \alpha \, e^{i\pi/4}
  \,.
\end{equation*}
In order to make this equation compatible with~\eqref{convention},
we see that the point spectrum of~$T_{\alpha}$ is not empty
if, and only if, $\arg\alpha \in (3\pi/4,5\pi/4)$,
or equivalently, $\Re\alpha < -|\Im\alpha|$.
If this condition is satisfied, $T_\alpha$~possesses
one eigenvalue corresponding to the right-hand side
put to the power~$4$. 
Summing up,
\begin{equation*} 
  \sigma_\mathrm{p}(T_{\alpha}) = 
  \begin{cases}
  \displaystyle
  \left\{
  -\frac{1}{4} \, \alpha^{4} 
  \right\}
  & \mbox{if} \quad \Re\alpha < -|\Im\alpha| \,,
  \\
  \varnothing
  & \mbox{if} \quad \Re\alpha \geq -|\Im\alpha| \,.
  \end{cases}
\end{equation*}
Notice that the arguments of~$\alpha$ for which the point spectrum is empty
correspond to real points intersecting the half-axis 
$[0,+\infty) = \sigma_\mathrm{ess}(T_\alpha)$.
So any eigenvalue of~$T_{\alpha}$, if it exists,
is necessarily discrete.

In particular, choosing $\alpha := e^{i\theta}$ 
and varying $\theta \in (3\pi/4,5\pi/4)$,
any boundary point of the closed disk 
$\{\lambda \in \Com : |\lambda|\leq \frac{1}{4}\}$
except for the point $\lambda = \frac{1}{4}$
will be an eigenvalue of~$H_{4\pi\alpha\delta}$.
This proves the optimality of 
Theorem~\ref{Thm.EV.bound.unif} for $d=3$
because $\|4\pi\alpha\delta\|_{L^1(\Real^3)} = 4\pi|\alpha| = 4\pi$.

\begin{Remark}
One can also justify the optimality of the constant 
in the eigenvalue bound corresponding to the 
inclusion~\eqref{opt.bound.unif} for $d=3$ in the 
limit $\eps \to 0^+$ by considering 
the following family of regular but 
singularly scaled potentials
\begin{equation*}
	\delta_\eps(x) :=
	\begin{cases}
	\displaystyle
	\frac{1}{4\pi\eps^3} 
	& \displaystyle \mbox{if} 
	\quad |x| < \eps \,,
	\\
	0 &\mbox{otherwise} \,. 
	\end{cases} 
\end{equation*}
Of course $\delta_\eps \to \delta$ 
in the sense of distributions as $\eps \to 0^+$.
What is more, restricting to radially symmetric eigenfunctions, 
the point spectrum of $H_{4\pi\alpha\delta_\eps}$
converges to the point spectrum of $H_{4\pi\alpha\delta}$ as $\eps \to 0^+$.
This can be checked by explicitly solving 
the radial parts of the differential equations of the eigenvalue problem
in the separate intervals where~$\delta_\eps$ is constant
and matching the solutions at the interface point $r=\eps$. 
\end{Remark}
\subsection*{Acknowledgment}
The research of D.K. was partially supported by the GACR grant No.~18-08835S.

%
%
\bibliography{bib_BiHM}
\bibliographystyle{amsplain}
\end{document}